\newcommand{\bqn}{\begin{equation}}
\newcommand{\eqn}{\end{equation}}
\newcommand{\bqnn}{\begin{equation*}}
\newcommand{\eqnn}{\end{equation*}}
\newcommand{\bear}{\begin{eqnarray}}
\newcommand{\eear}{\end{eqnarray}}
\newcommand{\bean}{\begin{eqnarray*}}
\newcommand{\eean}{\end{eqnarray*}}
\newtheorem{theorem}{Theorem}[section]
\newtheorem{corollary}[theorem]{Corollary}
\newtheorem{lemma}[theorem]{Lemma}
\newtheorem{definition}[theorem]{Definition}
\numberwithin{equation}{section}
\begin{document}
\title{ Uniqueness of Weak Solutions to \\ a Prion Equation with Polymer Joining}

\author{Elena Leis}
\address{Leibniz Universit\"at Hannover, Institut f\" ur Angewandte Mathematik, Welfengarten 1, D--30167 Hannover, Germany} 
\email{leis@ifam.uni-hannover.de}

\author{Christoph Walker}
\address{Leibniz Universit\"at Hannover, Institut f\" ur Angewandte Mathematik, Welfengarten 1, D--30167 Hannover, Germany} 
\email{walker@ifam.uni-hannover.de}
\keywords{Prions, polymer joining, integro-differential equation, weak solutions, uniqueness.}
\subjclass{}
%
%
\begin{abstract}
 We consider a model for prion proliferation that includes prion polymerization, polymer splitting, and polymer joining. The model consists of an ordinary differential equation for the prion monomers and a hyperbolic nonlinear dif\-fe\-ren\-ti\-al equation with integral terms for the prion polymers and was shown to possess global weak solutions for unbounded reaction rates \cite{LeisWalker}. Here we prove the uniqueness  of weak solutions. 
\end{abstract}
%
\maketitle
\pagestyle{myheadings}
\markboth{\sc{E. Leis \& Ch. Walker}}{\sc{A Prion Equation with Polymer Joining}}
%
%
\section{Introduction} \label{sec:int}

 In this article we consider a mathematical model for the dynamics of prions which are thought to be misfolded proteins and cause deadly neurodegenerative diseases  including ``mad cow disease''
in mammals. The model describes the proliferation of prions and was introduced in \cite{GvDWW07} to which we refer for more information regarding the biological background. The infectious prions are treated as polymers and interact with the noninfectious monomer form. 
The model includes polymerization, polymer joining,
and polymer splitting. These  processes can mathematically be described by a coupled system consisting of
an ordinary differential equation for the number of noninfectious 
monomers $v(t)\ge 0$, given by
\begin{equation}
\label{eqv}
\begin{split}
v'(t) & = \lambda-\gamma v(t) - \frac{v(t)}{1+\nu \displaystyle\int_{y_0}^{\infty}  u(t,z) z \mathrm{d}z} \int_{y_0}^{\infty} \tau(y)  u(t,y) \, \mathrm{d}y 
\\ & \quad + 2\int_{y_0}^{\infty} u(t,y) \beta (y) \int_0^{y_0} z \kappa(z,y)  \, \mathrm{d}z \, \mathrm{d}y \,,  
\end{split}
\end{equation}
and an integro-differential equation for the
density distribution function $u(t,y)\ge 0$ of infectious 
polymers of size $y>y_0$ of the form
 \begin{equation}\label{equ}
\begin{split}
 \partial_tu(t,y) + & \frac{v(t)}{1+\nu \displaystyle\int_{y_0}^{\infty}   u(t,z) z \mathrm{d}z}\partial_y{\left(\tau(y) u(t,y) \right)} \\ 
 & =  -(\mu(y)+\beta(y)) u(t,y)
 + 2 \int_y^{\infty} \beta(z) \kappa(y,z) u(t,z)  \, \mathrm{d}z \\ 
& \quad +\mathbf{1}_{[y>2y_0]}\int_{y_0}^{y-y_0}\eta (y-z,z) u(t,y-z) u(t,z) \, \mathrm{d}z
 - 2u(t,y)\int_{y_0}^{\infty}\eta (z,y) u(t,z) \, \mathrm{d}z
\end{split}
\end{equation}
for $y \in  Y:=(y_0, \infty)$. These equations are supplemented with the boundary condition
\begin{equation}
\label{RB0}
u(t,y_0) = 0 \, ,  \quad t>0  \,,
\end{equation}
and the initial values
\begin{equation}
\label{AW}
v(0) = v^0 \ , \quad u(0,y) = u^0(y) \, , \quad y \in (y_0, \infty) \, .
\end{equation}      
Here, $\lambda$  is a
constant monomer background source while $\gamma$ and $\mu(y)$ are the metabolic degradation rates for monomers, respectively, $y$-polymers. The function $\beta=\beta(y)$ is the splitting rate  for a polymer of size  $y$ into two polymers of size $z$ and $y-z$, where $\kappa(z,y)$ is the probability (density)
for this event. Any
daughter polymer with size less than the critical size $y_0>0$ is assumed to
disintegrate instantaneously into monomers. In the polymerization process,
infectious polymers of size $y>y_0$ attach noninfectious monomers at rate $\tau(y) > 0$. If $\nu>0$ there is a saturation effect when the number $\int_{y_0}^{\infty}  u(t,z) z \mathrm{d}z$ of monomers forming the infectious polymers becomes large resulting in less lengthening overall. Two polymers of size $y$ and $z$ may join at rate $\eta(y,z)$. Note that equation \eqref{equ} is reminiscent of the continuous coagulation-fragmentation equation known from physics (see e.g. \cite{FL} and the references therein).

For the case $\eta\equiv 0$, that is, when the bilinear polymer joining terms are neglected, equations \eqref{eqv}-\eqref{AW} were studied in \cite{EPW06,LW07,SW06,W} with respect to existence and uniqueness and in \cite{CLDDMP,CLODLMP,DG10,EPW06,G15,SW06} with respect to qualitative aspects.
The model with polymer joining was introduced in \cite{GvDWW07}. There it was assumed that  the rates have the particular form
 \begin{equation}\label{5}
     \tau \equiv \text{const}\,,\quad  \mu \equiv \text{const} \,,\quad  \eta \equiv \text{const}\,,\quad
\beta(y) =      \beta  y\,,\quad \kappa(z,y)  = \frac{1}{y}\,.
    \end{equation}
In this case, equation \eqref{equ} can be integrated and a closed system of ordinary differential equations for the unknowns $v$, $\int_{y_0}^{\infty}  u(y) \mathrm{d}y$, and $\int_{y_0}^{\infty}  u(y) y \mathrm{d}y$ can be derived that can be globally solved. The equations \eqref{eqv}-\eqref{equ} then decouple since $v$ is determined (see also \cite{EPW06,GPW06,PP-MWZ06}).

When polymer joining is taken into account, but \eqref{5} is not assumed, the existence of solutions to \eqref{eqv}-\eqref{AW} was established in \cite{LeisWalker}. More precisely, it was shown that for bounded reaction rates $\mu$, $\beta$, $\eta$, and $\tau$ a unique global classical solution exists. For unbounded (and thus biologically more relevant) reaction rates satisfying certain growth restrictions, the existence of global weak solutions was established. However, the uniqueness of weak solutions was left open and it is the purpose of this article to fill this gap. We shall show herein that under reasonable growth conditions on the reaction rates there is at most one weak solution. We thus extend the result of \cite{LW07} to include polymer joining using the same techniques. We shall point out here that we  also use ideas from \cite{FL} on the coagulation equation to handle the latter (see also \cite{Cepeda} where similar techniques are used to investigate uniqueness for the coagulation-fragmenation equations).
The uniqueness result from the present work complements the existence result of \cite{LeisWalker} to provide
 the well-posedness of \eqref{eqv}-\eqref{AW} in the framework of weak solutions.

 Before introducing the notation of a weak solution we remark that solutions are supposed to preserve the number of monomers. More precisely, let the splitting kernel $ \kappa \geq 0$ be a measurable function defined on $\mathcal{K}:=\{(z, y); y_0< y < \infty, 0 < z < y\}$ satisfying the symmetry condition
\begin{equation}
\kappa(z, y)=\kappa(y-z, y) \ , \quad (z, y) \in \mathcal{K} \ ,
\label{bin_split}
\end{equation}
and being normalized according to
\begin{equation}
2 \int_0^y z \kappa(z, y) \, \mathrm{d} z= y \ , \quad  \text{a.a. } y \in Y \ .
\label{mon_pres}
\end{equation}
Then splitting conserves the number of monomers and \eqref{bin_split}, \eqref{mon_pres} imply  
\begin{equation}\label{int_kappa}
\int_0^y \kappa (z, y) \, \mathrm{d} z= 1 \ , \quad  \text{a.a. } y \in Y \ .
\end{equation}
Note that if $\kappa$ is  e.g. of the form
\begin{equation}\label{k0_1}
\kappa (z, y) = \frac {1}{y} k_0 \left(  \frac{z}{y} \right) \ , \quad   y>y_0 \ , \quad  0<z<y \ ,
\end{equation} 
with a non-negative integrable function $k_0$ defined on $(0,1)$ satisfying
\begin{equation}\label{k0_2}
k_0( y) = k_0(1- y) \ , \quad   y \in (0,1) \ , \qquad \int_0^1 k_0(y) \, \mathrm{d} y = 1\,,
\end{equation}
then conditions \eqref{bin_split}, \eqref{mon_pres}  hold. In particular, for $k_0 \equiv 1$ one  obtains the rate
\begin{equation*}
\kappa(z,y) = \frac{1}{y} \, , \quad y > y_0 \, , \quad 0 < z < y \, ,
\end{equation*}
from \eqref{5} as considered in \cite{EPW06, GvDWW07, GPW06}.
We further assume that the polymer joining rate $\eta$ is non-negative and symmetric, that is,
\begin{equation}\label{etasym}
 0 \le \eta(y,z) = \eta(z,y) \,,\quad  y,z \in Y\,.
\end{equation}
Throughout this article we assume that
$\kappa$ satisfies conditions \eqref{bin_split}, \eqref{mon_pres} while  $\eta \in W_{\infty, \mathrm{loc}}^1(Y \times Y)$ satisfies \eqref{etasym}. We also assume that 
$$
\lambda, \gamma, \nu \geq 0\,
$$
and that $\tau$ is a positive measurable function on $Y$ growing at most linearly.
It is then straightforward to check that \eqref{bin_split}, \eqref{mon_pres}, and \eqref{etasym} imply that any solution $(v,u)$ to \eqref{eqv}-\eqref{AW} satisfies (formally) the monomer balance law
\begin{equation}\label{monomererhaltend}
\begin{split}
v(t) + &\int_{y_0}^{\infty}y u(t,y)\mathrm{d}y- v^0 - \int_{y_0}^{\infty}y u^0(y)\mathrm{d}y \\ 
&= \lambda t - \gamma \int_{0}^{t}v(s)\mathrm{d}s - \int_{0}^{t}\int_{y_0}^{\infty}y \mu(y) u(s,y)\mathrm{d}y\mathrm{d}s
\end{split}
\end{equation}
at time $t$.  That is, the overall number of monomers changes only due to natural production or metabolic degradation. To keep track of the  biologically important
quantities 
$$
\int_{y_0}^{\infty}  u(t,y)  \mathrm{d}y\qquad\text{and}\qquad \int_{y_0}^{\infty}  u(t,y) y \mathrm{d}y
$$
of all polymers respectively monomers forming those polymers, we shall thus consider solutions with $u(t,\cdot)$ belonging to the positive cone of $L_1(Y,y\mathrm{d}y)$,  denoted by $L_1^+(Y,y\mathrm{d}y)$.\\

\begin{definition}\label{D1}
Given $v^0 >0$ and $u^0 \in L_1^+(Y,y\mathrm{d}y)$ we  call a pair $(v,u)$ a  monomer preserving (global) weak solution to \eqref{eqv}-\eqref{AW} provided
\begin{itemize}
\item[\rm(i)] $v \in C^1(\mathbb{R}^+)$  is a non-negative solution to \eqref{eqv},
\item[\rm(ii)] $u \in L_{\infty,\mathrm{loc}} \big(\mathbb{R}^+, L_1^+(Y,y\mathrm{d}y) \big)$  is  a weak solution to \eqref{equ}, that is, it satisfies for all $t > 0$ 
\begin{align} \label{mu_beta_u}
&[(s,y) \to \big( \mu(y) + \beta(y)\big) u(s,y)] \in L_1 \big( (0,t) \times Y \big) 
\\ \label{eta_u}
&[(s,y,z)\mapsto \eta(y,z)u(s,y)u(s,z)]\in L_1((0,t)\times Y\times Y)
\end{align}
and 
\begin{align*}
\int_{y_0}^{\infty}\varphi(y) & u(t,y)\mathrm{d}y - \int_{y_0}^{\infty}\varphi(y)u^0(y)\mathrm{d}y
\\ = \, & \int_{0}^{t} \frac{v(s)}{1 + \nu \| u(s)\|_{L_1(Y, y \mathrm{d}y)}} \int_{y_0}^{\infty}\varphi'(y) \tau(y) u(s,y) \mathrm{d}y \mathrm{d}s \\
&-\int_{0}^{t} \int_{y_0}^{\infty}   \varphi(y) \mu(y)  u(s,y)  \, \mathrm{d} y \mathrm{d}s\\
&
 + \int_{0}^{t} \int_{y_0}^{\infty}  u(s,y)  \beta(y)  \left(   -\varphi(y)  +  2 \int_{y_0}^{y}  \varphi(z)  \kappa(z,y)   \, \mathrm{d} z  \right)   \, \mathrm{d} y \mathrm{d}s
 \\
& + \int_{0}^{t}\int_{y_0}^{\infty}  \int_{y_0}^{\infty}  (\varphi(y+z) - \varphi(y) - \varphi(z))  \eta(y,z)  u(s,y)u(s,z)  \, \mathrm{d}z  \, \mathrm{d}y \mathrm{d}s  
\end{align*}
for any test function $\varphi \in W^1_\infty(Y)$,
\item[\rm(iii)] the balance law \eqref{monomererhaltend} holds.
\end{itemize} 
\end{definition}

Note that the weak formulation in (ii) above is obtained by testing \eqref{equ} against the test function $\varphi$ and using for the operator
\bqn\label{Q}
Q[w](y):=\mathbf{1}_{[y>2y_0]}\int_{y_0}^{y-y_0}\eta (y-z,z) w(y-z) w(z) \, \mathrm{d}z
 - 2w(y)\int_{y_0}^{\infty}\eta (z,y) w(z) \, \mathrm{d}z\,,
\eqn
the identity     
\begin{equation}
\label{tilde}
\int_{y_0}^{\infty} \varphi(y)  Q[w](y)  \, \mathrm{d}y  = \int_{y_0}^{\infty}  \int_{y_0}^{\infty}  \big(\varphi(y+z) - \varphi(y) - \varphi(z)\big)  \eta(y,z)  w(y)w(z)  \, \mathrm{d}y  \, \mathrm{d}z\,,
\end{equation}
which follows from the symmetry of $\eta$.

As pointed out above the existence of a global weak solution in the sense of Definition~\ref{D1} was obtained in \cite{LeisWalker} under fairly general conditions on the reaction rates. We next state conditions under which such solutions are unique.

\subsection*{Main Results}

We shall first state simplified  and hence more illustrative versions of our actual results.  To this end we temporarily assume that the rates are of the particular form
\bqn\label{AK1}
\beta(y) = B y^b, \quad \mu(y) = M y^m, \quad \tau(y) = S y^{\theta}
\eqn
for $y > y_0$ with $B,M,S \geq 0$,  $0\le b,m \le 2$, and $ 0 \leq \theta \leq 1$. Moreover, let us also assume that 
\bqn\label{AK2}
\kappa \text{ is given by } \eqref{k0_1}, \eqref{k0_2}, \text{ where }  [y \mapsto y k_0(y)] \in L_{\infty}(0,1)\,.
\eqn 
We set
$$
\xi(x) := 2b \int_{x}^{1} k_0(z) \mathrm{d}z - b + 2x k_0(x)\ , \quad x \in (0,1) \,,
$$
and
$$
\alpha := \max\big\{\sup_{x \in (0,1)} \xi(x)\,,\, m\,,\, \theta\big\}  \, .
$$ 
Then we have the following uniqueness result for weak solutions with finite higher moments.

\begin{theorem}\label{ThE}
Suppose \eqref{AK1}, \eqref{AK2}  with $\alpha\in (0,2]$. Further suppose that there is a constant $K_0$ such that, if $\alpha \in (0,1]$, then
\begin{equation}\label{A}
 \frac{\eta(y,z)}{(y+z)^{\alpha}} +  \frac{(y^{\alpha} \wedge z^{\alpha}) |\partial_y \eta(y,z)|}{y^{\alpha-1} z^{\alpha}}\leq K_0\, ,   \quad (y,z) \in Y \times   Y \, ,
\end{equation}
while if $\alpha \in (1,2]$, then
\begin{equation}\label{B}
 \frac{\eta(y,z)}{y z^{\alpha-1} + y^{\alpha-1}z} +  \frac{(y \wedge z)(y^{\alpha-1} + z^{\alpha-1}) |\partial_y \eta(y,z)|}{y^{\alpha-1} z^{\alpha}}\leq K_0\, ,   \quad (y,z) \in Y \times   Y \, .
\end{equation}
Then there is $\sigma \geq 1$ (large enough and depending on $k_0$, $b$, $m$, $\theta$) such that \eqref{eqv}-\eqref{AW} has for each initial value $(v^0, u^0)$ with $v^0>0$ and $u^0 \in L_1^+(Y, y^{\sigma} \mathrm{d}y)$  at most one monomer-preserving weak solution  $(v,u)$  in the sense of Definition~\ref{D1} with $u \in L_{\infty,\mathrm{loc}} \big(\mathbb{R}^+, L_1(Y, y^{\sigma} \mathrm{d}y) \big)$. 
\end{theorem}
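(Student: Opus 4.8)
The plan is to derive a Gronwall estimate for a weighted distance between two solutions. Suppose $(v_1,u_1)$ and $(v_2,u_2)$ are monomer-preserving weak solutions on a fixed interval $[0,T]$ with the same initial value $(v^0,u^0)$, and put $V:=v_1-v_2$ and $w:=u_1-u_2$. By assumption each $u_i$ belongs to $L_{\infty}(0,T;L_1(Y,y^{\sigma}\rd y))$, so all moments of $u_1+u_2$ up to order $\sigma$ are bounded on $[0,T]$. I would track
\[
\Phi(t):=|V(t)|+\int_{y_0}^{\infty}|w(t,y)|\,y^{\alpha}\,\rd y
\]
and aim to prove $\Phi'(t)\le C(T)\,\Phi(t)$ with $\Phi(0)=0$, so that $\Phi\equiv0$ on $[0,T]$; since $T$ is arbitrary this yields $v_1=v_2$ and $u_1=u_2$. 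The weight exponent is the $\alpha$ fixed in the statement, and the hypothesis $u^0\in L_1(Y,y^{\sigma}\rd y)$ with $\sigma$ large enters precisely to keep the moment prefactors generated by the transport, fragmentation and joining terms finite and to make the tail contributions vanish in the limiting arguments below.

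The first step is the monomer equation. Subtracting the two copies of \eqref{eqv} and using that $t\mapsto\|u_i(t)\|_{L_1(Y,y\rd y)}$ is bounded on $[0,T]$, the difference of the saturated polymerization terms and of the fragmentation source term is Lipschitz in $(V,w)$ relative to the weighted norm; because $\theta\le\alpha$ and $b\le\alpha$ by the definition of $\alpha$, this produces
\[
|V(t)|\le C\int_0^t\Big(|V(s)|+\int_{y_0}^{\infty}|w(s,y)|\,y^{\alpha}\,\rd y\Big)\,\rd s .
\]

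The second and longer step is the weak formulation for $w$. Since $\mathrm{sign}(w)$ is not an admissible $W^1_{\infty}(Y)$ test function, I would regularize it by a smooth monotone approximation and use $y^{\alpha}$ times this regularized sign, passing to the limit at the end; together with the transport structure of the equation for $w$, this gives a rigorous meaning to the evolution of $\int_{y_0}^{\infty}|w|\,y^{\alpha}\,\rd y$. The velocity difference in the transport term I would split off, writing
\[
\frac{v_1}{1+\nu\|u_1\|}\,\partial_y(\tau u_1)-\frac{v_2}{1+\nu\|u_2\|}\,\partial_y(\tau u_2)
\]
as a piece carrying the common velocity and acting linearly on $w$, plus a remainder proportional to the difference of the velocities; the remainder is controlled by $\Phi$ through the already bounded moments, while the linear piece renormalizes to $\partial_y(\tau|w|)$ and, after integration, leaves the weight derivative $\alpha y^{\alpha-1}\tau(y)$, which is bounded by $y^{\alpha}$ up to a constant because $\tau$ grows at most linearly, i.e. $\theta\le1$. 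The fragmentation terms are handled exactly as in \cite{LW07}: testing the loss and gain pair against $y^{\alpha}\,\mathrm{sign}(w)$ and using \eqref{mon_pres}--\eqref{int_kappa} produces the function $\xi$, and the bound $\sup_{(0,1)}\xi\le\alpha$ built into $\alpha$ forces this contribution to be dominated by a constant multiple of $\int_{y_0}^{\infty}|w|\,y^{\alpha}\,\rd y$.

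The main obstacle is the polymer-joining term, where I would adapt the coagulation estimates of \cite{FL}. As $Q$ in \eqref{Q} is quadratic, the difference $Q[u_1]-Q[u_2]$ factorizes as a symmetric bilinear expression in $u_1+u_2$ and $w$; inserting the identity \eqref{tilde} with the regularized weight $y^{\alpha}\,\mathrm{sign}(w)$ yields a double integral whose integrand features the cancellation combination built from $(y+z)^{\alpha}$, $y^{\alpha}$ and $z^{\alpha}$ against $\eta(y,z)$. The difficulty is that a crude pointwise bound here loses a power of the size variable and only closes in a strictly higher moment of $w$ than $y^{\alpha}$, which would break the Gronwall loop; following \cite{FL} one must instead exploit the cancellation by transferring a derivative in the size variable onto the kernel, and this is exactly where $\partial_y\eta$ appears. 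Conditions \eqref{A} for $\alpha\le1$ and \eqref{B} for $\alpha\in(1,2]$ are precisely the pointwise bounds on $\eta/(y+z)^{\alpha}$, respectively $\eta/(yz^{\alpha-1}+y^{\alpha-1}z)$, together with the matching normalized bound on $|\partial_y\eta|$, that render the bilinear form bounded by $K_0$ times $\Phi(t)$ multiplied by moments of $u_1+u_2$ of order at most $\alpha$, finite since $\sigma$ is large. The delicate verification is that the two summands of \eqref{A}, respectively \eqref{B}, dominate the pure-weight and the kernel-derivative contributions uniformly in the regularization parameter. Once this is established, collecting the monomer, transport, fragmentation and joining estimates gives $\Phi'(t)\le C(T)\,\Phi(t)$, and Gronwall with $\Phi(0)=0$ completes the proof.
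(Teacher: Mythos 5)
Your overall strategy (a weighted $L_1$ distance plus a Gronwall loop, with the Fournier--Lauren\c cot cancellation for the joining term) points in the right direction, but it is built on the wrong functional, and the step where this matters is not a technicality. You propose to test the equation for $w=u_1-u_2$ against $y^{\alpha}$ times a regularized sign of $w$ and assert that the transport part ``renormalizes to $\partial_y(\tau|w|)$.'' For weak solutions in the sense of Definition~\ref{D1}, $u_i(t,\cdot)$ is merely an $L_1$ function, $\partial_y(\tau w)$ exists only as a distribution, and the product $\mathrm{sign}(w)\,\partial_y(\tau w)$ has no meaning; a renormalization/chain-rule step of this kind for a hyperbolic transport term requires DiPerna--Lions type arguments and regularity that is neither assumed nor available here. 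There is also the secondary problem that $y^{\alpha}\,\mathrm{sign}_\varepsilon(w(s,\cdot))$ is unbounded and time-dependent, hence not an admissible test function in Definition~\ref{D1}, so even writing down the evolution of $\int|w|\,y^{\alpha}\,\mathrm{d}y$ is not justified by the weak formulation. This is precisely the obstruction the paper is designed around.

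The paper's route is to work with the primitive $E(t,y):=\int_y^{\infty}(u-\hat u)(t,y_*)\,\mathrm{d}y_*$ instead of $w$ itself. Since $\partial_y E=\hat u-u\in L_1$, the function $E(t,\cdot)$ is absolutely continuous in $y$, the transport term $\tau\,\partial_y E$ is a genuine $L_1$ function, and multiplying the resulting identity \eqref{dtE} by $g(y)\,\mathrm{sign}(E)$ with $g(y)=y^{\alpha-1}$ (rather than weight $y^{\alpha}$ on $|w|$) is legitimate; the equation for $E$ is derived by integration by parts from admissible test functions, and the joining term is handled via the auxiliary function $\tilde G(s,y)=\int_{y_0}^{y}g(z)\,\mathrm{sign}(E(s,z))\,\mathrm{d}z$ and the inequality \eqref{R_G}, which is the rigorous form of the cancellation you describe. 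Concluding $E\equiv 0$ still yields $u=\hat u$. To repair your argument you would either have to justify the renormalization for $L_1$ weak solutions (which you cannot, at this level of generality) or switch to the primitive formulation; as written, the central Gronwall inequality for $\Phi$ is not established.
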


Theorem~\ref{ThE} is a special case of a more general result stated in Theorem~\ref{T1}. The latter does actually not require structural assumptions on the reaction rates as in \eqref{AK1} but rather suitable growth conditions. Note, however, that it does not provide uniqueness of weak solutions in the natural phase space $L_1(Y, y\mathrm{d}y)$. For rates $\mu$ and $\beta$ with at most linear growth we can though improve Theorem~\ref{ThE} to obtain a uniqueness result in $L_1(Y, y\mathrm{d}y)$ which in particular includes the rates from \eqref{5}. The following theorem is a special case of a more general result, see Theorem~\ref{T2}.

\begin{theorem}\label{ThE2}
Suppose \eqref{AK1} with  $b, m \leq 1$ and \eqref{AK2}. Further let \eqref{A} hold with 
$\alpha := m$ and suppose that
$\xi(x) \geq 0$ for $x \in (0,1)$.
Then, given any $(v^0, u^0) \in (0,\infty) \times L_1^+(Y, y\mathrm{d}y)$ there exists at most one monomer-preserving weak solution  $(v,u)$ to \eqref{eqv}-\eqref{AW}  in the sense of Definition~\ref{D1}.
\end{theorem}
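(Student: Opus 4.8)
The plan is to prove uniqueness through a weighted $L_1$ stability estimate closed by Gronwall's lemma, combining the strategy of \cite{LW07} for the polymerization and splitting terms with the technique of \cite{FL} for the polymer-joining term. Let $(v_1,u_1)$ and $(v_2,u_2)$ be two monomer-preserving weak solutions as in Definition~\ref{D1} with the same data $(v^0,u^0)$, and put $V:=v_1-v_2$, $U:=u_1-u_2$, together with the scalar polymerization speeds $a_i(s):=v_i(s)\big/\big(1+\nu\,\|u_i(s)\|_{L_1(Y,y\,\mathrm{d}y)}\big)$. Using the natural weight $W(y):=y$, I would show that $\Theta(t):=|V(t)|+\int_{y_0}^{\infty}y\,|U(t,y)|\,\mathrm{d}y$ satisfies $\Theta(t)\le C\int_0^t\Theta(s)\,\mathrm{d}s$ on every bounded interval $[0,T]$, the constant $C$ depending only on $T$ and on the locally bounded norms $\sup_{[0,T]}\|u_i(s)\|_{L_1(Y,y\,\mathrm{d}y)}$. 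As $\Theta(0)=0$, Gronwall's lemma then forces $\Theta\equiv0$, so that $U\equiv0$ and $V\equiv0$. The significance of the additional hypotheses $b,m\le1$ and $\xi\ge0$ of Theorem~\ref{ThE2} is that they render this natural weight admissible, which is exactly what upgrades the conclusion from the higher-moment space $L_1(Y,y^{\sigma}\,\mathrm{d}y)$ of Theorem~\ref{ThE} to the phase space $L_1(Y,y\,\mathrm{d}y)$.

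To produce the estimate I would subtract the two weak formulations and, heuristically, test the resulting identity against $y\,\sign\big(U(s,y)\big)$; rigorously this is carried out through a Lipschitz regularization $s_\delta$ of $\sign$ and a passage to the limit $\delta\to0$, after first extending the formulation of Definition~\ref{D1} to time-dependent test functions. Because the solutions are only weak, the polymerization term is handled as in \cite{LW07}, via the characteristics of the transport operator $a_i\,\partial_y(\tau\,\cdot\,)$, which avoids differentiating $u_i$ and isolates the effect of the differing speeds; the difference of speeds is itself controlled by $|a_1(s)-a_2(s)|\le C\big(|V(s)|+\|U(s)\|_{L_1(Y,y\,\mathrm{d}y)}\big)\le C\,\Theta(s)$, where the finiteness of the first moment is essential and where $\theta\le1$ keeps the convective contribution of order $y$. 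The degradation term contributes $-\int_{y_0}^\infty y\,\mu\,|U|\le0$, and the splitting term is likewise dissipative at the weight $y$ by the monomer-conservation identity \eqref{mon_pres} (after bounding $\sign(U(y))\,U(z)\le|U(z)|$); the restrictions $b,m\le1$ and $\xi\ge0$ are the conditions under which, in the general setting of Theorem~\ref{T2}, these linear terms remain controllable at this natural weight. The scalar equation \eqref{eqv} then yields $|V(t)|\le C\int_0^t\Theta$ by elementary Lipschitz estimates.

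The genuinely delicate term, and the heart of the proof, is the polymer-joining contribution, for which condition \eqref{A} is needed in full strength. Writing $g:=u_1+u_2$ and using bilinearity, $u_1(y)u_1(z)-u_2(y)u_2(z)=\tfrac12\big(U(y)g(z)+g(y)U(z)\big)$, so that by the symmetry identity \eqref{tilde} the joining term of the difference equation becomes $\int_0^t\!\iint\big(\varphi(y+z)-\varphi(y)-\varphi(z)\big)\,\eta(y,z)\,U(y)\,g(z)\,\mathrm{d}z\,\mathrm{d}y\,\mathrm{d}s$ with $\varphi(y)=y\,\sign(U(y))$. The obstruction is that $\varphi$ carries the sign of $U$ evaluated at three distinct points, and the naive bound $\big|\varphi(y+z)-\varphi(y)-\varphi(z)\big|\le W(y+z)+W(y)+W(z)$ would demand a moment of $g$ of order strictly above $1$, which is not available in $L_1(Y,y\,\mathrm{d}y)$. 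Following \cite{FL} (see also \cite{Cepeda}) I would instead integrate by parts in the variable $y$ so as to transfer a derivative onto $\eta$, trading the missing decay for the factor $\partial_y\eta$; the two quantities appearing in \eqref{A}, namely $\eta/(y+z)^{\alpha}$ and $(y^{\alpha}\wedge z^{\alpha})|\partial_y\eta|/(y^{\alpha-1}z^{\alpha})$ with $\alpha=m\le1$, are exactly what is required to estimate the resulting kernel using only the first moment of $g$, so that this term too is dominated by $C\int_{y_0}^\infty y\,|U|\,\mathrm{d}y$.

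Collecting the four contributions gives $\Theta(t)\le C\int_0^t\Theta(s)\,\mathrm{d}s$, and Gronwall's lemma concludes that $\Theta\equiv0$. I expect the main obstacle to be precisely this last step for the joining term: carrying out the integration by parts in a way that is justified for the weak solutions (the smoothness being placed on the kernel side via $\eta\in W^1_{\infty,\mathrm{loc}}$) and verifying that \eqref{A} suffices to close the estimate at the first moment, rather than at the larger moment that the crude bound would force. Once $U\equiv0$ is established, $a_1\equiv a_2$, and $V$ then solves the homogeneous linear equation obtained from \eqref{eqv} with vanishing initial datum, whence $V\equiv0$ and uniqueness follows.
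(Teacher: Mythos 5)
Your overall architecture is the right one (a weighted $L_1$ Gronwall estimate, dissipativity of the splitting term at the weight $y$ via \eqref{mon_pres}, the bilinear decomposition $u_1u_1-u_2u_2=\tfrac12(U\,(u_1+u_2)+(u_1+u_2)\,U)$, and an integration by parts transferring a derivative onto $\eta$ so that \eqref{A} closes the coagulation estimate at the first moment), and your formal computations for each of the four contributions are essentially the ones that appear, after unwinding, in the paper. The genuine gap is the rigorous device that makes the sign function and the transport term admissible for weak solutions. You propose to test the difference of the weak formulations against $y\,\mathrm{sign}(U(s,y))$, regularized by $s_\delta$. But $U(s,\cdot)$ is merely an $L_1$ function, so $y\,s_\delta(U(s,y))$ is not in $W^1_\infty(Y)$ and is not an admissible test function in Definition~\ref{D1}; moreover the convective term $\int \varphi'\tau U$ then produces the distributional derivative of $s_\delta(U(s,\cdot))$, which cannot be controlled, and a characteristics argument does not apply to the difference of two weak solutions transported with \emph{different} speeds $a_1\neq a_2$. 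The paper's resolution --- which is the missing idea --- is to pass to the primitive $E(t,y):=\int_y^\infty(u-\hat u)(t,y_*)\,\mathrm{d}y_*$, derive a pointwise-in-$y$ evolution identity \eqref{EHF} for $E$ (so that the transport term becomes $\tau\,\partial_yE=\tau(\hat u-u)$, which is meaningful), and run Gronwall on $\int_{y_0}^\infty|E(t,y)|\,\mathrm{d}y$; the sign is then applied to the absolutely continuous function $E$, and in the Fournier--Lauren\c{c}ot step the Lipschitz function $\tilde G(s,y)=\int_{y_0}^y\mathrm{sign}(E(s,z))\,\mathrm{d}z$ plays the role you assign to $y\,\mathrm{sign}(U(y))$, with \eqref{A} entering through \eqref{eta_W}--\eqref{p_eta_G} for $g\equiv1$.

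A second, substantive omission is the truncation step. Even with the primitive, the estimate is first established on $(y_0,R)$ and one must show that the remainder $V(t,R)$ (carrying the tails of $\mu'+\beta'$ and $B_2$) vanishes as $R\to\infty$; for data only in $L_1^+(Y,y\,\mathrm{d}y)$ this requires proving $\lim_{R\to\infty}R\int_0^t\int_R^\infty\beta\,u\,\mathrm{d}y\,\mathrm{d}s=0$ by testing with $\varphi=R\wedge y$ and using the monomer balance \eqref{monomererhaltend} together with \eqref{26}, and the extra joining term generated by this test function is controlled precisely by \eqref{eta_mu} (which under your hypotheses follows from \eqref{A} with $\alpha=m$) and \eqref{y_mu}. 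Your proposal never explains why $b,m\le1$, $\xi\ge0$ and \eqref{A} suffice to kill these tail terms, which is exactly the content distinguishing Theorem~\ref{ThE2} from Theorem~\ref{ThE}. Finally, note that the conclusion $V\equiv0$ does not need a separate argument once $E\equiv0$: the bound $|v-\hat v|\le c(T)\int_0^t\int g|E|$ is part of the Gronwall loop (Lemma~\ref{L3}), not an afterthought.
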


 As pointed out before, the above theorems extend the results from \cite{LW07} for the case $\eta\equiv 0$, i.e. without polymer joining. To include polymer joining herein we use ideas from \cite{FL} on the coagulation equation. Assumptions \eqref{A}, \eqref{B} correspond to the assumptions therein.\\

Combining now the uniqueness statements above with the existence results of \cite[Theorem 2.3, Proposition 2.4]{LeisWalker} we obtain the well-posedness of \eqref{eqv}-\eqref{AW} within the framework of weak solutions.

\begin{corollary}\label{C1}
{\bf (a)} Let the assumptions of Theorem~\ref{ThE} hold for $\alpha \in (0,1]$. Then there is $\sigma \geq 1$ (large enough) such that \eqref{eqv}-\eqref{AW} has for each initial value $(v^0, u^0)$ with $v^0>0$ and $u^0 \in L_1^+(Y, y^{\sigma} \mathrm{d}y)$  a unique global monomer-preserving weak solution  $(v,u)$  in the sense of Definition~\ref{D1} such that $u \in L_{\infty,\mathrm{loc}} \big(\mathbb{R}^+, L_1(Y, y^{\sigma} \mathrm{d}y) \big)$.\\
{\bf (b)} Let the assumptions of Theorem~\ref{ThE2} hold. 
Then, given any $(v^0, u^0) \in (0,\infty) \times L_1^+(Y, y\mathrm{d}y)$, there exists a unique global monomer-preserving weak solution  $(v,u)$ to \eqref{eqv}-\eqref{AW}  in the sense of Definition~\ref{D1}.
\end{corollary}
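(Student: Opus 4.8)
The plan is to derive the corollary by matching the existence statements of \cite[Theorem 2.3, Proposition 2.4]{LeisWalker} with the uniqueness statements of Theorems~\ref{ThE} and~\ref{ThE2} proved above. Since uniqueness is already in hand, the only genuine work is to confirm that, under the respective hypotheses, the cited existence results produce a weak solution lying in exactly the function class in which the corresponding uniqueness theorem applies. Thus the proof is essentially a consistency check between the two sets of hypotheses together with an invocation of the already established results.

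For part (a), I would first recall that \cite[Theorem 2.3, Proposition 2.4]{LeisWalker} guarantee, under growth conditions on $\mu$, $\beta$, $\tau$, $\eta$ that are met by \eqref{AK1}--\eqref{AK2} and \eqref{A}, the existence of at least one global monomer-preserving weak solution $(v,u)$ in the sense of Definition~\ref{D1}. The key point to verify is the propagation of the higher moment: for $\sigma \geq 1$ chosen as in Theorem~\ref{ThE} (large enough, depending on $k_0$, $b$, $m$, $\theta$) and for initial data $u^0 \in L_1^+(Y, y^{\sigma}\mathrm{d}y)$, the constructed solution remains in $L_{\infty,\mathrm{loc}}(\mathbb{R}^+, L_1(Y, y^{\sigma}\mathrm{d}y))$. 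Heuristically this follows by testing the weak formulation with $\varphi(y)=y^{\sigma}$ (suitably truncated to lie in $W^1_\infty(Y)$): the splitting contribution $-\varphi(y)+2\int_{y_0}^{y}\varphi(z)\kappa(z,y)\,\mathrm{d}z$ becomes dissipative for $\sigma$ large because $\int_0^1 x^{\sigma}k_0(x)\,\mathrm{d}x\to 0$, and the remaining gain terms are controlled by the at most polynomial growth encoded in \eqref{AK1}; a Gronwall argument then bounds the $\sigma$-th moment on each finite time interval. Once this moment control is secured, Theorem~\ref{ThE} applies to this very solution, and existence together with uniqueness yields the claimed unique global weak solution.

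For part (b) no moment propagation beyond the monomer-preserving requirement is needed, since the natural phase space $L_1(Y, y\mathrm{d}y)$ is already the class in which Theorem~\ref{ThE2} provides uniqueness. I would again invoke \cite[Theorem 2.3, Proposition 2.4]{LeisWalker} to produce, for every $(v^0,u^0)\in(0,\infty)\times L_1^+(Y,y\mathrm{d}y)$, a global weak solution; here the restrictions $b,m\leq 1$ in \eqref{AK1} keep the reaction rates within the at most linear growth regime, so that the balance law \eqref{monomererhaltend} controls $\int_{y_0}^{\infty} y\,u(t,y)\,\mathrm{d}y$ locally in time. Theorem~\ref{ThE2} then applies directly, and combining the two statements gives well-posedness in $L_1(Y, y\mathrm{d}y)$.

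The main obstacle is the moment bookkeeping in part (a): one must ensure that the value of $\sigma$ demanded by the uniqueness estimate of Theorem~\ref{ThE} is compatible with the moment that the existence construction of \cite{LeisWalker} can actually propagate, and that the present growth hypotheses \eqref{AK1}--\eqref{AK2}, \eqref{A}, \eqref{B} are genuinely subsumed by those under which \cite{LeisWalker} establishes existence. Apart from this reconciliation of admissible parameter ranges, which must be carried out carefully so that both results apply to the same pair $(v,u)$, the corollary is an immediate consequence of the theorems already proved together with the cited existence results.
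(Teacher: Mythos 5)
Your proposal is correct and follows essentially the same route as the paper: the corollary is obtained by combining the existence results of \cite[Theorem 2.3, Proposition 2.4]{LeisWalker} with Theorems~\ref{ThE} and~\ref{ThE2}, the only point needing care being that the constructed solution lies in the class where uniqueness holds (which is exactly why the paper restricts part (a) to $\alpha\in(0,1]$, ensuring existence in $L_{\infty,\mathrm{loc}}(\mathbb{R}^+,L_1(Y,y^{\sigma}\mathrm{d}y))$). Your heuristic moment-propagation argument is extra detail the paper simply delegates to the cited existence reference rather than reproving.
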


The restriction to $\alpha \in (0,1]$ in Corollary~\ref{C1} (a) ensures the existence of a solution  $(v,u)$ with $u$ belonging to $L_{\infty,\mathrm{loc}}(\mathbb{R}^+, L_1(Y, y^{\sigma} \mathrm{d}y))$ for  $u^0 \in L_1^+(Y, y^{\sigma}\mathrm{d}y)$. However, the existence of a solution without this additional constraint can be shown also for $\alpha \in (1,2]$, see \cite{LeisWalker}.

\section{Sharper Statements of the Main Results}\label{EsL}

We now state more general results than in Theorem~\ref{ThE} and Theorem~\ref{ThE2} that do not rely on structural conditions on the reaction rates as in \eqref{AK1}-\eqref{AK2}, but rather on growth conditions.

Let us recall that we assume throughout that $\lambda, \gamma, \nu \geq 0$, that
$\kappa$ satisfies conditions \eqref{bin_split}, \eqref{mon_pres} while  $\eta \in W_{\infty, \mathrm{loc}}^1(Y \times Y)$ satisfies \eqref{etasym}.  Then, as in \cite{LW07}, we shall further assume that
\begin{equation}\label{18}
\lim_{y* \searrow y} \int_{y}^{y_*}  \kappa(z,y_*) \mathrm{d} z = 0 \ , \quad y > y_0\,,  
\end{equation}
and
\begin{equation}
\label{mu_beta_E}
\mu, \beta \in W_{\infty, \mathrm{loc}}^{1} (Y) \quad \text{with} \quad \mu, \beta \geq 0 \, . 
\end{equation}
Let there be a strictly positive function
\begin{equation}
\label{g}
g \in W^{1}_{\infty, \mathrm{loc}}(Y) \quad \text{with} \quad g'(y) \leq c_0(g(y)+1) \, , ~  y>y_0 \ ,
\end{equation}
for some constant $c_0 > 0$
such that
\begin{equation}
\label{20} 
\begin{cases} \tau \in W^1_{\infty, \mathrm{loc}}(Y) \ ,  \quad 0 < \tau(y) \leq c_0 y \ , \quad y \geq y_0 \ , \\ |\tau'(y)| \leq c_0 g(y) \quad \text{and} \quad (\tau g)'(y) \leq c_0 g(y) \  , \quad y>y_0 \,. \end{cases} 	
\end{equation}
Introducing $g$'s primitive
$$
G(y) := \int_{y_0}^{y} g(z) \mathrm{d} z \ ,  \quad y > y_0 \,, 
$$
we shall further assume that
\begin{align}
\label{21}
(\mu + \beta)(y) &\leq c_0((\mu + \beta)(y_*) + G(y_*) + y_*) \ , && y_* > y > y_0 \ ,\\
\label{22}
|\mu'(y)| + |\beta'(y)| &\leq c_0 g(y) \ , && y > y_0 \ ,\\
\label{23}
\left| \partial_y \left( \beta(y) \int_{0}^{y_0} z \kappa(z,y) \mathrm{d}z \right) \right| + |B_2(y_*,y)| &\leq c_0 g(y)\ , && y > y_* \geq y_0 \,, 
\end{align}
where
\begin{equation}\label{24}
B_2(y_*,y) := \partial_y \left( \beta(y) \int_{y_*}^{y} \kappa(z,y) \mathrm{d} z \right)  \,, \quad  y > y_* \geq y_0 \, .
\end{equation}
In addition, assume that
\begin{equation}
\label{25}
\int_{y_0}^{y} g(y_*) |2 B_2(y_*,y) - (\beta' + \mu')(y)| \mathrm{d} y_* \leq g(y)(c_0 + (\mu + \beta)(y))\,,\quad   y > y_0 \, .
\end{equation}
As for the polymer joining rate $\eta  \in W_{\infty, \mathrm{loc}}^1(Y\times Y)$ we suppose that there is a constant $K > 0$ such that 
\begin{equation}
\label{eta_W}
\frac{\eta(y,z)}{G(y) + y } 
+ \frac{|\partial_y \eta(y,z)|}{g(y)} \leq K  \big(G(z) + z \big)    \,, \quad (y,z) \in Y \times Y\,,
\end{equation}
along with
\begin{align}
\label{eta_g}
\eta(y,z)|g(y+z) - g(y)| &\leq  K g(y) \big(G(z) + z \big) \, , && (y,z) \in Y \times Y\,,\\
\label{eta_G}
\eta(y,z)\Big[ G(y+z) - G(y\vee z) + G(y \wedge z)  \Big] &\leq K  \big(G(y) + y \big) \big(G(z) + z \big)  \, , && (y,z) \in Y \times Y\,,\\
\label{p_eta_G} 
\left|\partial_y \eta(y,z)\right| \Big[G(y+z) - G(y\vee z) + G(y \wedge z)  \Big]  &\leq K g(y) \big( G(z) + z \big)\, ,&& (y,z) \in Y \times Y\,.
\end{align}
We then assume that there is a constant $g_0>0$ such that
\begin{equation}
\label{32a}
\text{if\  $\nu > 0$,\ then \  $g(y) \geq g_0 > 0$\ for \ $y \in Y$\,.}
\end{equation}
For the first uniqueness result we also require that
\begin{equation}
\label{31}
\int_{y_*}^{y}  |(\mu' + \beta')(z)| \mathrm{d} z \leq c_1(1+(\mu + \beta)(y)) \ , \quad y > y_* > y_0 \ ,
\end{equation}
and
\begin{equation}
\label{32}
\int_{y'}^{y}  |B_2(y',y_*)| \mathrm{d} y_* \leq c_1(1+(\mu + \beta)(y)) \ , \quad y > y' > y_0 \ ,
\end{equation}
for some constant $c_1>0$.
Then we have the following uniqueness result for solutions with sufficient integrability.

\begin{theorem}\label{T1}
Let \eqref{18}-\eqref{32} be satisfied. Then, for any initial value  $(v^0,u^0)$ with $v^0 >0$ and $u^0 \in L_1^+(Y, y\mathrm{d}y) \cap L_1\big( Y, (\mu + \beta)(y) G(y) \mathrm{d}y \big)$ there is at most one monomer-preserving weak solution   $(v,u)$  to \eqref{eqv}-\eqref{AW} in the sense of Definition~\ref{D1} such that
$$
u \in L_{\infty, \mathrm{loc}} \big( \mathbb{R}^+, L_1(Y, G(y)\mathrm{d}y) \big) \cap L_{1, \mathrm{loc}} \big( \mathbb{R}^+, L_1(Y, (\mu + \beta)(y)G(y)\mathrm{d}y) \big) \, .
$$ 
\end{theorem}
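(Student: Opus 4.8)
The plan is to establish a weighted $L_1$ contraction for the difference of two solutions and to close it by Gronwall's lemma. Let $(v_1,u_1)$ and $(v_2,u_2)$ be two monomer-preserving weak solutions in the sense of Definition~\ref{D1} with the integrability required in the statement, both issuing from $(v^0,u^0)$, and set $w:=u_1-u_2$, $V:=v_1-v_2$, and $c_i(s):=v_i(s)\,(1+\nu\|u_i(s)\|_{L_1(Y,y\mathrm{d}y)})^{-1}$. Since on each compact time interval $v_i$ and $\|u_i(s)\|_{L_1(Y,y\mathrm{d}y)}$ are bounded and the denominators stay $\geq 1$ (with \eqref{32a} supplying the lower bound on $g$ needed when $\nu>0$), each $c_i$ is bounded and $|c_1(s)-c_2(s)|\lesssim |V(s)|+\|w(s)\|_{L_1(Y,y\mathrm{d}y)}$. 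Subtracting the two copies of \eqref{eqv} and bounding the polymerization, splitting and joining integrals by moments of $u_i$ controlled under the hypotheses yields $|V(t)|\lesssim\int_0^t\big(|V(s)|+U(s)\big)\,\mathrm{d}s$, where $U(t):=\int_{y_0}^\infty |w(t,y)|\,(y+G(y))\,\mathrm{d}y$. Everything therefore reduces to a differential inequality for $U$.

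To estimate $U$ I would use the weak formulation in Definition~\ref{D1}(ii) for $w$, noting that after writing $u_1(y)u_1(z)-u_2(y)u_2(z)=w(y)u_1(z)+u_2(y)w(z)$ every term is linear in $w$. Testing (formally) against $\varphi=\sign(w)\,\Phi$ with weight $\Phi:=y+G$ and using $\sign(w)\,\partial_y(\tau w)=\partial_y(\tau|w|)$ a.e., the transport term becomes $c_1\int_{y_0}^\infty \tau|w|\,\Phi'\,\mathrm{d}y$ after an integration by parts whose boundary contribution at $y_0$ vanishes by \eqref{RB0}; since \eqref{20} gives $(\tau g)'\leq c_0 g$ and hence $\tau g\lesssim G+1$, and $\tau\lesssim y$, this is $\lesssim U$. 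For the velocity mismatch $(c_1-c_2)\int_{y_0}^\infty\varphi'\,\tau u_2\,\mathrm{d}y$ I would exploit, as in \cite{LW07}, that $c_i(s)$ is spatially constant: passing to the characteristic variable $y\mapsto\int_{y_0}^y\mathrm{d}y'/\tau(y')$ turns both drifts into translations run with the clocks $\int_0^s c_i$, so that this term is governed by $|c_1-c_2|$ times a fixed weighted mass of $u_2$ and contributes $\lesssim|V|+U$. The fragmentation part splits into the nonpositive loss $-\int_{y_0}^\infty(\mu+\beta)|w|\,\Phi\,\mathrm{d}y$, kept on the left as a dissipation, and a gain term $2\int_{y_0}^\infty\beta(y)|w(y)|\big(\int_{y_0}^y\Phi(z)\kappa(z,y)\,\mathrm{d}z\big)\mathrm{d}y$. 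The linear part of $\Phi$ is handled exactly by \eqref{mon_pres} (the daughters carry strictly less than the parent's mass, by the monomers released below $y_0$), while \eqref{21}--\eqref{25}, \eqref{31} and \eqref{32}, which encode the $y$-derivatives $\mu',\beta',B_2$ through $g$, are precisely what makes the $G$-part of the gain dominated by the loss plus a multiple of $U$.

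The joining term is where I would import the strategy of \cite{FL}. With $\varphi=\sign(w)\,\Phi$ its integrand is built from $\sign(w(y+z))\,\Phi(y+z)-\sign(w(y))\,\Phi(y)-\sign(w(z))\,\Phi(z)$ weighted by $\eta(y,z)\,[\,w(y)u_1(z)+u_2(y)w(z)\,]$. Estimating the signs of the off-diagonal terms by $\pm 1$ and symmetrizing in $(y,z)$ via \eqref{etasym}, one is left exactly with kernel combinations of the form $G(y+z)-G(y\vee z)+G(y\wedge z)$, and the hypotheses \eqref{eta_W}--\eqref{p_eta_G} are tailored so that these (together with the differences $g(y+z)-g(y)$ produced by the interaction of joining with the transport, whence the $|\partial_y\eta|$ and $g$-growth conditions) are controlled by $K\,(G(z)+z)$ times $g(y)$ or $G(y)+y$. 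Integrating against $u_i$, whose weighted mass is controlled under the hypotheses, the full joining contribution is $\lesssim U$. Collecting the three blocks gives $U(t)+\int_0^t\!\!\int_{y_0}^\infty(\mu+\beta)|w|\,G\,\mathrm{d}y\,\mathrm{d}s\lesssim\int_0^t(U+|V|)\,\mathrm{d}s$, which together with the bound on $|V|$ and Gronwall's lemma forces $U\equiv0$ and $V\equiv0$, i.e. uniqueness.

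The principal obstacle is making the formal computation with $\varphi=\sign(w)\,\Phi$ rigorous, since $\sign(w)\,\Phi$ is not an admissible $W^1_\infty(Y)$ test function and the drift is coupled nonlocally through $c_i(s)$ and the saturation denominator. I would resolve this by testing against a smoothed sign $\sign_\delta(w(s,\cdot))\,\Phi$, using that $t\mapsto\int_{y_0}^\infty\varphi\,w(t)\,\mathrm{d}y$ is absolutely continuous for fixed $\varphi$, differentiating in $t$, and letting $\delta\to0$, the transport contribution surviving because $\sign_\delta(w)\,\partial_y(\tau w)\to\partial_y(\tau|w|)$ in the relevant integrated sense; the mismatch term must be treated in the characteristic variable to avoid differentiating the merely integrable $u_2$. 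Verifying that every weighted integral arising in this limit is finite is exactly where the extra integrability built into Theorem~\ref{T1}, namely $u^0\in L_1(Y,(\mu+\beta)G\,\mathrm{d}y)$ and $u\in L_{1,\mathrm{loc}}(\mathbb{R}^+,L_1(Y,(\mu+\beta)G\,\mathrm{d}y))$, is consumed, and the most delicate bookkeeping is confirming that \eqref{25} genuinely closes the $G$-weighted fragmentation balance so that the gain never overwhelms the loss.
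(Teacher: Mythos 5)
There is a genuine gap, and it sits exactly at the point you flag as ``the principal obstacle'': the direct $L_1$ contraction on $w=u_1-u_2$ with test function $\mathrm{sign}(w)\,\Phi$ cannot be made rigorous for the weak solutions of Definition~\ref{D1}, and your proposed fix does not repair it. A weak solution is only $L_{\infty,\mathrm{loc}}(\mathbb{R}^+,L_1)$ in $y$; it has no Sobolev regularity in $y$ whatsoever. Consequently $\mathrm{sign}_\delta(w(s,\cdot))\,\Phi$ is in general not in $W^1_\infty(Y)$ and is therefore not an admissible test function, and the identity $\mathrm{sign}(w)\,\partial_y(\tau w)=\partial_y(\tau|w|)$ that your transport estimate relies on has no meaning: $\partial_y(\tau w)$ exists only as a distribution, and the weak formulation never pairs it against anything less regular than a Lipschitz function. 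The same difficulty reappears in the velocity-mismatch term $(c_1-c_2)\int\varphi'\tau u_2$, since $\varphi'$ again involves the derivative of $\mathrm{sign}(w)$; passing to characteristic variables does not remove it. This is not a technicality to be smoothed away --- it is the reason the paper does not argue on $w$ at all.

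The paper's resolution, which your proposal misses, is to integrate once in $y$ and work with the primitive $E(t,y):=\int_y^\infty(u-\hat u)(t,y_*)\,\mathrm{d}y_*$. This function is absolutely continuous in $y$ with $\partial_y E=\hat u-u\in L_1$, so $\mathrm{sign}(E)$ is legitimate, the chain rule $\mathrm{sign}(E)\,\partial_y E=\partial_y|E|$ holds a.e., and the transport term $\tau\,\partial_y E$ in the evolution equation \eqref{dtE} for $E$ can be handled by integration by parts using $(\tau g)'\le c_0 g$ from \eqref{20}. The quantity that is closed by Gronwall is $\int_{y_0}^\infty g(y)|E(t,y)|\,\mathrm{d}y$ (together with separate Gronwall estimates for $|E(t,y_0)|$ and $|v-\hat v|$), not your $U(t)=\int|w|(y+G)\,\mathrm{d}y$; the fragmentation bookkeeping via \eqref{25} and the removal of the truncation remainder $V(t,R)$ via \eqref{31}--\eqref{32} are likewise formulated for $E$, not $w$. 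Your treatment of the joining term is in spirit the right one (it is the \cite{FL} mechanism producing $G(y+z)-G(y\vee z)+G(y\wedge z)$ under \eqref{eta_W}--\eqref{p_eta_G}), but in the paper it too is executed at the level of $E$, through the function $\tilde G(s,y)=\int_{y_0}^y g(z)\,\mathrm{sign}(E(s,z))\,\mathrm{d}z$ and a truncation $I_S$, precisely because only $E$ has enough $y$-regularity to integrate by parts against. To complete your argument you would have to reformulate every block in terms of the primitive; as written, the central computation does not survive the passage from formal to rigorous.
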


Although Theorem~\ref{T1} applies to a wide class of reaction rates it does not include the rates from \eqref{5} to yield uniqueness when $u$ belongs to the natural phase space $L_{\infty, \mathrm{loc}} \big( \mathbb{R}^+, L_1(Y, y\mathrm{d}y) \big)$ but rather for $u\in L_{\infty, \mathrm{loc}} \big( \mathbb{R}^+, L_1(Y, y^2\mathrm{d}y) \big)$.  To remedy this issue we shall consider \eqref{18}-\eqref{p_eta_G} in the particular case
$g \equiv 1$ (then \eqref{32a} trivially holds)
and further suppose that there are $C_1 > 0$ and $\delta > 0$ such that
\begin{equation}
\label{eta_mu}
\eta(y,z) \leq C_1 (\mu(y) + \mu(z)) \ , \quad (y,z) \in Y \times Y \ ,
\end{equation}
and
\begin{equation}
\label{26}
\int_{0}^{y} \frac{y_*}{y} \left( 1 - \frac{y_*}{y} \right) \kappa(y_*,y) \mathrm{d} y_* \geq \delta \ , \quad y > y_0 \,.
\end{equation}
Moreover, we suppose that we can decompose $\mu$ and $\beta$ in the form
\begin{equation}
\label{mu_beta_1}
\begin{cases} 
\mu = \mu_1 + \mu_2  \quad \text{ with } \mu_1, \mu'_1 \geq 0 \ , \quad \mu'_2 \in L_1(Y) \ , 
\\ \beta = \beta_1 + \beta_2  \quad \text{ with } \beta_1, \beta'_1 \geq 0 \ , \quad \beta'_2 \in L_1(Y) \, 
\end{cases}
\end{equation}
and that there is a constant $C_2 > 0$ such that
\begin{equation}
\label{RB}
\frac{1}{R} \int_{y_0}^{R} | B_2(y_*,y)| \mathrm{d} y_* \leq C_2 \beta'_1(y) \ , \quad y > R > y_0 \ .
\end{equation}
Then we obtain the following uniqueness result in the natural phase space $ L_1(Y, y \mathrm{d}y)$.

\begin{theorem}\label{T2}
Suppose that \eqref{18}-\eqref{p_eta_G} hold with $g \equiv 1$ and let \eqref{eta_mu}-\eqref{RB} be satisfied.
Then, for any initial value  $(v^0,u^0)$ with $v^0 >0$ and  $u^0 \in L_1^+(Y, y \mathrm{d}y)$ there is at most one monomer-preserving weak solution   $(v,u)$  to \eqref{eqv}-\eqref{AW} in the sense of Definition~\ref{D1}.  
\end{theorem}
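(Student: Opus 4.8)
The plan is to establish a weighted $L_1$-contraction estimate on the difference of two putative solutions and close it by Gronwall's lemma. Suppose $(v_1,u_1)$ and $(v_2,u_2)$ are two monomer-preserving weak solutions in the sense of Definition~\ref{D1} sharing the initial datum $(v^0,u^0)$, and set $w:=u_1-u_2$ and $\rho:=v_1-v_2$, so that $\rho(0)=0$ and $w(0)=0$. Since $g\equiv 1$ here we have $G(y)=y-y_0$, so the natural weight is $\psi(y):=G(y)+y\sim y$ and the quantity to control is $\Phi(t):=\int_{y_0}^\infty |w(t,y)|\,\psi(y)\,\mathrm{d}y$, finite because $w\in L_{\infty,\mathrm{loc}}(\mathbb{R}^+,L_1(Y,y\,\mathrm{d}y))$. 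The idea is to test the difference of the two weak formulations in Definition~\ref{D1}(ii) against $\varphi=\sign(w(t,\cdot))\,\psi$; since this is not admissible in $W^1_\infty(Y)$, I would first work with a truncated and mollified surrogate $\sign_{\e}(w)\,(\psi\wedge R)$, derive the estimate, and then send $\e\to 0$ and $R\to\infty$, exactly as in \cite{LW07,FL}. This produces a differential inequality for $\Phi$.

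The transport and linear reaction terms are handled as in the proof of Theorem~\ref{T1}. The transport contribution, after integration by parts in $y$ and use of the sign structure, is controlled by the Lipschitz bounds on $\tau$ in \eqref{20} together with the growth of $\psi$; the nonlinear prefactor $v/(1+\nu\|u\|_{L_1(Y,y\mathrm{d}y)})$ is split into a part stemming from $\rho$ and a part stemming from $\|u_1\|-\|u_2\|$, both dominated by $|\rho(t)|$ and $\Phi(t)$, using \eqref{32a} when $\nu>0$. The death term $-\int \mu\,|w|\,\psi$ is \emph{dissipative}, and the fragmentation term is treated through the weak-form kernel expression $-\varphi(y)+2\int_{y_0}^y\varphi(z)\kappa(z,y)\,\mathrm{d}z$ via \eqref{21}--\eqref{25} and \eqref{RB}. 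Here the decomposition \eqref{mu_beta_1} is essential: the monotone parts $\mu_1,\beta_1$ contribute favourably signed terms while the remainders $\mu_2',\beta_2'\in L_1(Y)$ enter only as bounded perturbations.

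The heart of the argument, and the step I expect to be the main obstacle, is the \emph{polymer-joining term}. Expanding the bilinear difference as $u_1(y)u_1(z)-u_2(y)u_2(z)=w(y)u_1(z)+u_2(y)w(z)$ and inserting it into \eqref{tilde} produces terms weighted by $\varphi(y+z)-\varphi(y)-\varphi(z)$. The loss parts, carrying $-\varphi(y)-\varphi(z)$, combine with the sign of $w$ to give controlled or favourably signed contributions; the genuinely problematic piece is the \emph{gain} term carrying $\varphi(y+z)=\sign(w(y+z))\,\psi(y+z)$, which is not sign-definite and transfers mass to the larger size $y+z$. In the natural phase space $L_1(Y,y\,\mathrm{d}y)$ this gain cannot be absorbed by the first moment alone, and the extra hypotheses of Theorem~\ref{T2} serve precisely to absorb it into the dissipation generated by death and splitting. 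Concretely, \eqref{eta_mu} bounds $\eta(y,z)\le C_1(\mu(y)+\mu(z))$, so the coagulation gain is dominated by the death term produced above, while the non-degeneracy \eqref{26} guarantees that splitting supplies enough additional dissipation to close the estimate. The remaining $\eta$-weighted bilinear integrals are bounded in terms of $\Phi(t)$ and the locally bounded moments $\|u_i(t)\|_{L_1(Y,y\mathrm{d}y)}$ by means of the FL-type conditions \eqref{eta_W}, \eqref{eta_g}, \eqref{eta_G}, and \eqref{p_eta_G}; finiteness of all these integrals, needed to justify the sign approximation, follows from \eqref{eta_u}.

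Finally I would close the system. Subtracting the two copies of \eqref{eqv} and using the Lipschitz dependence of its right-hand side on $v$ and on the moments of $u$, together with \eqref{RB}, yields $|\rho'(t)|\le C(\Phi(t)+|\rho(t)|)$. Combining this with the differential inequality for $\Phi$ gives a closed estimate of the form
$$
\frac{\mathrm{d}}{\mathrm{d}t}\big(\Phi(t)+|\rho(t)|\big)\le C(T)\,\big(\Phi(t)+|\rho(t)|\big)\,,\qquad 0<t<T\,,
$$
with $C(T)$ finite on each compact interval thanks to the local $L_1(Y,y\,\mathrm{d}y)$-bounds on $u_1,u_2$. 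Since $\Phi(0)+|\rho(0)|=0$, Gronwall's lemma forces $\Phi\equiv 0$ and $\rho\equiv 0$ on $[0,T]$ for every $T>0$, whence $u_1=u_2$ and $v_1=v_2$, proving uniqueness.
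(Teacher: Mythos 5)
Your overall strategy --- a weighted $L_1$-contraction for $w=u_1-u_2$ closed by Gronwall --- is not the one the paper uses, and it runs into a genuine obstruction at the very first step. You propose to test with (a mollification of) $\varphi=\mathrm{sign}(w(t,\cdot))\,\psi$. But $w(t,\cdot)$ is merely an element of $L_1(Y,y\,\mathrm{d}y)$ with no differentiability in $y$, so $\mathrm{sign}_\e(w)(\psi\wedge R)$ is not in $W^1_\infty(Y)$, and no amount of mollifying the sign function repairs this: the transport contribution in Definition~\ref{D1}(ii) requires $\varphi'$, and $\partial_y\big[\mathrm{sign}_\e(w)\big]=\mathrm{sign}_\e'(w)\,\partial_y w$ simply does not exist for an $L_1$ function. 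This is precisely why the paper (following \cite{LW07} and \cite{FL} --- neither of which tests with the sign of the difference itself, contrary to your ``exactly as in'' claim) works instead with the primitive $E(t,y)=\int_y^\infty(u-\hat u)(t,y_*)\,\mathrm{d}y_*$, which is absolutely continuous in $y$ with $\partial_yE=\hat u-u\in L_1$; one then estimates $\int_{y_0}^\infty g\,|E|\,\mathrm{d}y$ (here with $g\equiv1$) using $\mathrm{sign}(E)$ and the Lipschitz function $\tilde G(s,y)=\int_{y_0}^y g\,\mathrm{sign}(E)\,\mathrm{d}z$, so that the hyperbolic term causes no trouble. Measuring the difference through $E$ rather than through $\int y\,|w|\,\mathrm{d}y$ is the key idea your proposal is missing.

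Second, the mechanism you propose for closing the coagulation gain term does not work. After bounding $|\varphi(y+z)|\le\psi(y+z)\le\psi(y)+\psi(z)$ and using \eqref{eta_mu}, the gain is dominated by terms such as $C_1\,\|u_1+u_2\|_{L_1}\int y\,\mu(y)\,|w(y)|\,\mathrm{d}y$, whereas the available death dissipation is only $-\int\mu(y)\,|w(y)|\,\psi(y)\,\mathrm{d}y$; absorption would require $C_1\,\|u_1+u_2\|_{L_1}$ to be small, which is not assumed. In the paper, \eqref{eta_mu} and \eqref{26} play an entirely different role: they are used to establish the tail estimate \eqref{R_beta}, i.e.\ $\lim_{R\to\infty}R\int_0^t\int_R^\infty\beta\,u\,\mathrm{d}y\,\mathrm{d}s=0$ (obtained by testing with $R\wedge y$ and checking via \eqref{eta_mu} and dominated convergence that the additional $\eta$-term vanishes), which in turn yields $\limsup_{R\to\infty}V(t,R)=0$ and removes the remainder in the Gronwall inequality \eqref{E_0}. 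The coagulation terms themselves are handled in Lemma~\ref{L5} through \eqref{tilde} and the quantity $\tilde G(s,z+y_*)-\tilde G(s,z)-\tilde G(s,y_*)$, using only \eqref{eta_W}--\eqref{p_eta_G}, with no recourse to the death term at all. As written, your argument therefore has two genuine gaps: an inadmissible test function for the transport part, and an estimate for the coagulation gain that does not close.
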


 In the next section we derive suitable estimates on the primitive of the difference of two solutions. This first proves Theorem~\ref{T1} and Theorem~\ref{T2} which then entail Theorem~\ref{ThE} and Theorem~\ref{ThE2}. 

\section{Proofs}

\subsection*{A Priori Estimates}\label{rA}

Throughout this section we suppose that \eqref{18}-\eqref{32a} are satisfied. Let  $(v^0,u^0) \in (0,\infty) \times L_1^+(Y, y \mathrm{d} y)$ be given and consider two monomer-preserving solutions $(v,u)$  and $(\hat{v}, \hat{u})$ to \eqref{eqv}-\eqref{AW} in the sense of Definition~\ref{D1} such that
$$
u, \hat{u} \in L_{1,\mathrm{loc}}\big( \mathbb{R}^+, L_1(Y, G(y)\mathrm{d}y) \big) \, .
$$
Let us point out that then, in particular,
\begin{equation}\label{u_hat_u}
u,\hat{u} \in L_{1,\mathrm{loc}} \big( \mathbb{R}^+, L_1(Y,r(y)\mathrm{d}y) \big)\,,
\end{equation}
where $r(y):= (\mu + \beta)(y) + G(y) + y$ for $y \in Y$ and that \eqref{21} implies
\begin{equation}
\label{27}
\lim_{y \rightarrow \infty} (\mu + \beta)(y) \int_{y}^{\infty} |\phi(y_*)| \mathrm{d} y_* = 0 \quad \text{for} \quad \phi \in L_1(Y, r(y) \mathrm{d} y) \,.
\end{equation}
We now define
$$
E(t,y) := \int_{y}^{\infty} (u-\hat{u})(t,y_*) \mathrm{d} y_* \ , \quad (t,y) \in \mathbb{R}^+ \times Y \,   . 
$$
Clearly, to prove that $u$ and $\hat u$ coincide it suffices show that $E\equiv 0$. Let us fix $T > 0$ in the following.
Note that 
\begin{equation}
\label{E}
E \in L_{1,\mathrm{loc}} \big(  \mathbb{R}^+ , L_1(Y, g(y) \mathrm{d}y) \big) 
\end{equation} 
and $\partial_y E = \hat{u} - u$. Using integration by parts it then follows as in \cite[Lemma 2.2]{LW07} that $E$ satisfies the evolution equation 
\begin{equation}\label{dtE}
\begin{split}
& \partial_t E(t,y)  + \frac{v(t)}{1 + \nu \int_{y_0}^{\infty} z u(t,z) \mathrm{d}z} \, \tau(y) \partial_y E(t,y)
\\ & =
\left( \frac{v(t)}{1 + \nu \int_{y_0}^{\infty} z u(t,z) \mathrm{d}z}-\frac{\hat{v}(t)}{1 + \nu \int_{y_0}^{\infty} z \hat{u}(t,z) \mathrm{d}z} \right) \tau(y) \hat{u}(t,y) - (\mu + \beta)(y) E(t,y)
\\ & \qquad - \int_{y}^{\infty} (\mu' + \beta')(y_*) E(t,y_*) \mathrm{d} y_*
\\ & \qquad + 2\int_{y}^{\infty} E(t,y_*) B_2(y,y_*) \mathrm{d}y_*
\\ & \qquad + \int_{y_0}^{y} \int_{y_0}^{y}
\mathbf{1}_{[y,\infty)}(z+y_*) \eta(y_*,z)
[u(t,y_*)u(t,z) - \hat{u}(t,y_*) \hat{u}(t,z)] \mathrm{d} y_* \mathrm{d} z
\\ & \qquad - \int_{y}^{\infty} \int_{y}^{\infty}
 \eta(y_*,z)[u(t,y_*)u(t,z) - \hat{u}(t,y_*) \hat{u}(t,z)] \mathrm{d} y_* \mathrm{d} z 
\end{split} 
\end{equation}
for $t\ge 0$ and $y\in Y$. Therefore, introducing for $0 \le t \le T$ and $y \in Y$
\begin{equation*}\label{H}
\begin{split}
H[u,\hat{u}]  (t,y)  := &  \frac{-v(t)}{1 + \nu \int_{y_0}^{\infty} z u(t,z) \mathrm{d}z}\tau(y) \partial_y E(t,y)
\\ & \quad + \left( \frac{v(t)}{1 + \nu \int_{y_0}^{\infty} z u(t,z) \mathrm{d}z}-\frac{\hat{v}(t)}{1 + \nu \int_{y_0}^{\infty} z \hat{u}(t,z) \mathrm{d}z} \right)  \tau(y) \hat{u}(t,y)
\\ & \quad - (\mu + \beta)(y) E(t,y)
- \int_{y}^{\infty} (\mu' + \beta')(y_*) E(t,y_*) \mathrm{d} y_*
\\ & \quad  + 2\int_{y}^{\infty} E(t,y_*) B_2(y,y_*) \mathrm{d}y_*
\end{split}
\end{equation*}
and
\begin{equation*}
\label{F}\begin{split}
F[u,\hat{u}](t,y) &:= \int_{y_0}^{y} \int_{y_0}^{y}
\mathbf{1}_{[y,\infty)}(z+y_*) \eta(y_*,z)
 [u(t,y_*)u(t,z) - \hat{u}(t,y_*) \hat{u}(t,z)] \mathrm{d} y_* \mathrm{d} z
\\ & \qquad - \int_{y}^{\infty} \int_{y}^{\infty}
 \eta(y_*,z)[u(t,y_*)u(t,z) - \hat{u}(t,y_*) \hat{u}(t,z)] \mathrm{d} y_* \mathrm{d} z\,,
\end{split}
\end{equation*}
it follows from $E(0,y) = 0$, $y \in Y$, that
\begin{equation}
\label{EHF}
E(t,y) = \int_{0}^{t} H[u,\hat{u}](s,y) \mathrm{d} s +  \int_{0}^{t} F[u,\hat{u}](s,y) \mathrm{d} s \ , \quad y\in Y \ , \quad 0 \le t \le T \,.
\end{equation}
Our aim is to  estimate
\begin{equation}\label{g_E}
\int_{y_0}^{\infty} g(y)~ |E(t,y)|~ \mathrm{d}y=\int_{y_0}^{\infty} g(y) ~ \mathrm{sign}(E(t,y)) ~E(t,y) \mathrm{d}y 
\end{equation}
 and then apply Gronwall's inequality to obtain   that $E\equiv 0$.  This is implied by the subsequent lemmata.

\begin{lemma}\label{L2}
There is $c(T) > 0$ such that
$$
|E(t,y_0)| \leq c(T) \int_{0}^{t} \int_{y_0}^{\infty} g(y) |E(s,y)| \mathrm{d} y \mathrm{d} s \ , \quad 0 \le t \le T \ .
$$
\end{lemma}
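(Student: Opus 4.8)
The plan is to evaluate the representation \eqref{EHF} at the left endpoint $y=y_0$ and to estimate the two resulting time integrals $\int_0^t H[u,\hat u](s,y_0)\,\mathrm{d}s$ and $\int_0^t F[u,\hat u](s,y_0)\,\mathrm{d}s$ separately. The crucial simplification is that the boundary condition \eqref{RB0} forces $u(s,y_0)=\hat u(s,y_0)=0$, hence $\partial_y E(s,y_0)=(\hat u-u)(s,y_0)=0$; consequently the first two terms in the definition of $H[u,\hat u]$ (the transport term and the monomer-ratio term, which carry the factors $\partial_yE(s,y_0)$ and $\hat u(s,y_0)$) vanish at $y=y_0$, leaving
\begin{equation*}
H[u,\hat u](s,y_0) = -(\mu+\beta)(y_0)\,E(s,y_0) - \int_{y_0}^\infty (\mu'+\beta')(y_*)E(s,y_*)\,\mathrm{d}y_* + 2\int_{y_0}^\infty E(s,y_*)B_2(y_0,y_*)\,\mathrm{d}y_*.
\end{equation*}
The last two integrals I would bound directly by $\int_{y_0}^\infty g(y_*)|E(s,y_*)|\,\mathrm{d}y_*$ using the pointwise estimates $|\mu'|+|\beta'|\le c_0 g$ from \eqref{22} and $|B_2(y_0,\cdot)|\le c_0 g$ from \eqref{23}, while the remaining term produces the constant multiple $(\mu+\beta)(y_0)\,|E(s,y_0)|$ of the boundary value itself, to be absorbed later.

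For the $F$-part the first double integral is over the empty set, so $F[u,\hat u](s,y_0)=-\int_{y_0}^\infty\int_{y_0}^\infty \eta(y_*,z)\,[uu-\hat u\hat u](s)\,\mathrm{d}y_*\,\mathrm{d}z$. I would split $uu-\hat u\hat u=(u-\hat u)\,u+\hat u\,(u-\hat u)$, write $u-\hat u=-\partial_{y_*}E$ (respectively $-\partial_z E$) in the relevant variable, and integrate by parts; by the symmetry \eqref{etasym} the two contributions are handled identically. Each integration by parts yields three pieces: a boundary term at $y_0$ proportional to $E(s,y_0)\int_{y_0}^\infty \eta(y_0,z)u(s,z)\,\mathrm{d}z$, controlled through $\eta(y_0,z)\le K y_0(G(z)+z)$ from \eqref{eta_W} (recall $G(y_0)=0$); a boundary term at infinity which vanishes, since $\eta(y_*,z)E(s,y_*)\le K(G(z)+z)(G(y_*)+y_*)\int_{y_*}^\infty|u-\hat u|\to 0$ by \eqref{u_hat_u} and monotonicity of $G+\mathrm{id}$ (cf. \eqref{27}); and an interior term $\int\!\int \partial_{y_*}\eta(y_*,z)\,E(s,y_*)u(s,z)$, bounded via $|\partial_{y_*}\eta|\le K g(y_*)(G(z)+z)$ by $\int_{y_0}^\infty g|E|\,\mathrm{d}y_*$ times $\int_{y_0}^\infty (G(z)+z)u(s,z)\,\mathrm{d}z$. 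The latter moment of $u$ and its $\hat u$-analogue are bounded uniformly on $[0,T]$ by the assumed solution regularity together with monomer preservation, so all these coefficients are dominated by a single constant $c(T)$.

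Collecting the $H$- and $F$-estimates yields an inequality of the form
\begin{equation*}
|E(t,y_0)| \le c(T)\int_0^t |E(s,y_0)|\,\mathrm{d}s + c(T)\int_0^t \int_{y_0}^\infty g(y)\,|E(s,y)|\,\mathrm{d}y\,\mathrm{d}s, \qquad 0\le t\le T,
\end{equation*}
and applying Gronwall's inequality to $t\mapsto |E(t,y_0)|$, with the second term treated as a nondecreasing inhomogeneity, absorbs the first integral and produces exactly the claimed bound (with a possibly enlarged constant $c(T)$).

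The step I expect to be the main obstacle is the rigorous justification of the integrations by parts in the quadratic $\eta$-term, in particular verifying that the boundary contribution at $y=\infty$ genuinely vanishes and that all the iterated integrals converge; this rests on the decay of $E(s,\cdot)$ encoded in \eqref{u_hat_u}--\eqref{27} together with the growth control \eqref{eta_W} on $\eta$ and $\partial_y\eta$. A secondary point, needed so that $c(T)$ is a true constant rather than merely an $L_1$-in-time weight, is confirming that the moments $\int_{y_0}^\infty(G(z)+z)u(s,z)\,\mathrm{d}z$ and $\int_{y_0}^\infty(G(z)+z)\hat u(s,z)\,\mathrm{d}z$ stay bounded on $[0,T]$ for solutions in the admissible class.
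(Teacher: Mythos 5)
Your overall strategy (reduce to an inequality $|E(t,y_0)|\le c(T)\int_0^t|E(s,y_0)|\,\mathrm{d}s+c(T)\int_0^t\int_{y_0}^\infty g|E|\,\mathrm{d}y\,\mathrm{d}s$ and apply Gronwall, estimating the fragmentation terms via \eqref{22}, \eqref{23} and the joining term via integration by parts with \eqref{eta_W}) matches the paper's, and your treatment of the $F$-part, including the boundary terms at $y_0$ and at infinity, is essentially identical to the computation in \eqref{f_E}. However, your starting point contains a genuine gap: you evaluate the representation \eqref{EHF} at the single point $y=y_0$ and invoke the pointwise boundary condition \eqref{RB0} to conclude $\partial_yE(s,y_0)=(\hat u-u)(s,y_0)=0$ and $\hat u(s,y_0)=0$. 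For weak solutions in the sense of Definition~\ref{D1}, $u(s,\cdot)$ and $\hat u(s,\cdot)$ are merely elements of $L_1(Y,y\,\mathrm{d}y)$, so their values at $y_0$ (and hence $H[u,\hat u](s,y_0)$, which contains $\tau(y_0)\partial_yE(s,y_0)$ and $\tau(y_0)\hat u(s,y_0)$) are not defined; moreover, \eqref{RB0} is not part of the weak formulation in any pointwise sense --- it is encoded only implicitly through the choice of test functions. The identity \eqref{dtE}/\eqref{EHF} holds for (a.e.) $y\in Y=(y_0,\infty)$, and passing to the limit $y\searrow y_0$ in the term $\int_0^t\tau(y)(\hat u-u)(s,y)\,\mathrm{d}s$ is not justified for $L_1$ densities. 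Note also that if the term involving $\hat u(s,y_0)$ did \emph{not} vanish, you would need the bound on $|v-\hat v|$ from Lemma~\ref{L3}, whose proof uses Lemma~\ref{L2} --- so there is no safe fallback within your scheme.

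The paper avoids this entirely by taking the admissible test function $\varphi\equiv 1\in W^1_\infty(Y)$ in Definition~\ref{D1}(ii): since $\varphi'\equiv 0$, the transport term (and with it the problematic factor $v/(1+\nu\|u\|_{L_1(Y,y\mathrm{d}y)})$) disappears identically, and one obtains a well-defined identity for $E(t,y_0)=\int_{y_0}^\infty(u-\hat u)(t,y)\,\mathrm{d}y$ in which only integrals of $u-\hat u$ against bounded-variation weights occur. Integration by parts in $y$ then produces exactly the terms you estimate (with boundary contributions involving only $E(s,y_0)$ itself, which is well defined since $E(t,\cdot)$ is absolutely continuous), and the remainder of your argument goes through verbatim. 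To repair your proof, replace the evaluation of \eqref{EHF} at $y=y_0$ by this weak-formulation step; the estimates you then perform are correct. Your secondary concern about the moments $\int_{y_0}^\infty(G(z)+z)u(s,z)\,\mathrm{d}z$ is harmless: by \eqref{u_hat_u} they are integrable in $s$ on $(0,T)$, which suffices for Gronwall's inequality with an $L_1$ coefficient.
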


\begin{proof}
Choose $\varphi \equiv 1$ in Definition~\ref{D1} and recall from \eqref{Q} the definition of $Q$. Then integration by parts yields
\begin{equation*}
\begin{split}
|E(t,y_0)| &= \left| \int_{y_0}^{\infty} (u - \hat{u})(t,y_*) \mathrm{d} y \right|
\\ & \leq \left| \int_{0}^{t} \int_{y_0}^{\infty} \left( 2 \beta(y) \int_{y_0}^{y} \kappa(y_*,y) \mathrm{d}y_* - (\beta + \mu)(y) \right)  (u - \hat{u})(s,y) \mathrm{d} y \mathrm{d} s  \right| 
\\ &  \quad  + \left| \int_{0}^{t} \int_{y_0}^{\infty} \big( Q[u(s,\cdot)](y)  -  Q[\hat{u}(s,\cdot)](y) \big) \mathrm{d} y \mathrm{d} s  \right| 
\\ & \leq  \left| \int_{0}^{t} \left[  \left( - 2 \beta(y) \int_{y_0}^{y} \kappa(y_*,y) \mathrm{d}y_* + (\beta + \mu)(y) \right) E(s,y) \right]_{y = y_0}^{y = \infty} \mathrm{d} s  \right| 
\\ & \quad +\left| \int_{0}^{t} \int_{y_0}^{\infty} \left( 2 B_2(y_0,y) - (\beta' + \mu')(y) \right) E(s,y) \mathrm{d} y \mathrm{d} s \right|
\\ &  \quad + \left| \int_{0}^{t} \int_{y_0}^{\infty} \big( Q[u(s,\cdot)](y)  -  Q[\hat{u}(s, \cdot)](y) \big) \mathrm{d} y \mathrm{d} s  \right| \,.
\end{split}
\end{equation*} 
Owing to \eqref{int_kappa} 
and \eqref{27} (applied to $\phi=u-\hat u$, see \eqref{u_hat_u}),  the boundary term at $y=\infty$ vanishes and we thus deduce with the help of \eqref{mu_beta_E}, \eqref{22}, and \eqref{23} that 
\begin{equation}
\label{E_y_0}
\begin{split}
|E(t,y_0)| & \leq c\int_{0}^{t} |E(s,y_0)| \mathrm{d} s + c \int_{0}^{t} \int_{y_0}^{\infty} g(y) |E(s,y)| \mathrm{d} y \mathrm{d} s
\\ &  \quad + \left| \int_{0}^{t} \int_{y_0}^{\infty} \big( Q[u(s,\cdot)](y)  -  Q[\hat{u}(s, \cdot)](y) \big) \mathrm{d} y \mathrm{d} s  \right| \, .
\end{split}
\end{equation}
To estimate the last term we use \eqref{tilde} and obtain
\begin{equation}\label{f_E}
\begin{split}
&\left|  \int_{0}^{t} \int_{y_0}^{\infty} \big( Q[u(s,\cdot)](y)  -  Q[\hat{u}(s,\cdot)](y) \big) \mathrm{d} y \mathrm{d} s  \right|  
\\ 
& \qquad   =  \left| - \int_{0}^{t} \int_{y_0}^{\infty} \int_{y_0}^{\infty}  \eta(y,z) [u(s,y) u(s,z) - \hat{u}(s,y) \hat{u}(s,z)] \mathrm{d} z \mathrm{d} y \mathrm{d} s  \right|
\\
 &  \qquad \le \left| \int_{0}^{t} \int_{y_0}^{\infty} \int_{y_0}^{\infty}  \eta(y,z) \partial_y E(s,y)  u(s,z) \mathrm{d} z \mathrm{d} y \mathrm{d} s  \right| 
\\ & \qquad \qquad +  \left| \int_{0}^{t} \int_{y_0}^{\infty} \int_{y_0}^{\infty}  \eta(y,z) \partial_z E(s,z) \hat{u}(s,y) \mathrm{d} z \mathrm{d} y \mathrm{d} s  \right| \ .
\end{split}
\end{equation}
Integration by parts gives
\begin{equation*}
\begin{split}
\int_{0}^{t}  \int_{y_0}^{\infty} &\int_{y_0}^{\infty}  \eta(y,z) \partial_y E(s,y)  u(s,z) \mathrm{d} z \mathrm{d} y \mathrm{d} s 
\\ & = \int_{0}^{t} \int_{y_0}^{\infty} \Big[ \eta(y,z) E(s,y) \Big]_{y=y_0}^{y=\infty}  u(s,z) \mathrm{d} z  \mathrm{d} s 
\\ & \quad  - \int_{0}^{t} \int_{y_0}^{\infty} \int_{y_0}^{\infty} \partial_y \eta(y,z)  E(s,y)  u(s,z) \mathrm{d} z \mathrm{d} y \mathrm{d} s \,, 
\end{split}
\end{equation*}
where the boundary term at $y=\infty$ vanishes since the monotonicity of  $G$ and \eqref{u_hat_u} imply 
$$
 \big(y + G(y) \big) |E(s,y)| \leq \int_{y}^{\infty} \big(y_* + G(y_*) \big) |u(y_*) - \hat{u}(y_*)| \mathrm{d} y_* \rightarrow 0 \quad \text{as} \quad y \to \infty \, .
$$ 
Hence, together with \eqref{eta_W} and \eqref{u_hat_u} we  deduce for $t\in [0,T]$
\begin{equation*}
\begin{split}
\Big| \int_{0}^{t}  \int_{y_0}^{\infty} \int_{y_0}^{\infty} & \eta(y,z) \partial_y E(s,y)  u(s,z) \mathrm{d} z \mathrm{d} y \mathrm{d} s \Big| \\
&\leq c(T)\int_{0}^{t} |E(s,y_0)| \mathrm{d} s + c(T) \int_{0}^{t} \int_{y_0}^{\infty} g(y) |E(s,y)| \mathrm{d} y \mathrm{d} s \,. 
\end{split}
\end{equation*}
From this along with \eqref{E_y_0}, \eqref{f_E}, and the symmetry of $\eta$ put us in a position to apply Gronwall's inequality to conclude.
\end{proof}

We next estimate the difference between $v$ and $\hat{v}$.

\begin{lemma}\label{L3}
There is $c(T) > 0$ such that
$$
|(v-\hat{v})(t)| \leq c(T) \int_{0}^{t} \int_{y_0}^{\infty} g(y) |E(s,y)| \mathrm{d} y \mathrm{d} s \ , \quad 0 \le t \le T \,.
$$
\end{lemma}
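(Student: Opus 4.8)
\emph{Proof proposal.} The plan is to subtract the two copies of the governing ODE \eqref{eqv}, integrate in time using the common initial datum $v(0) = \hat v(0) = v^0$, estimate the resulting terms, and close the argument with Lemma~\ref{L2} and Gronwall's inequality. Writing $(v-\hat v)(t) = \int_0^t (v' - \hat v')(s)\,\mathrm{d}s$, I would split the integrand into three groups: the dissipative term $-\gamma(v-\hat v)$; the polymerization term
$$
\frac{v(s)}{1+\nu\int_{y_0}^\infty z u(s,z)\,\mathrm{d}z}\int_{y_0}^\infty \tau(y) u(s,y)\,\mathrm{d}y - \frac{\hat v(s)}{1+\nu\int_{y_0}^\infty z\hat u(s,z)\,\mathrm{d}z}\int_{y_0}^\infty\tau(y)\hat u(s,y)\,\mathrm{d}y\,;
$$
and the monomer-production term $2\int_{y_0}^\infty (u-\hat u)(s,y)\,\beta(y)\int_0^{y_0} z\kappa(z,y)\,\mathrm{d}z\,\mathrm{d}y$. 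The dissipative term contributes $\gamma\int_0^t |(v-\hat v)(s)|\,\mathrm{d}s$, to be absorbed by Gronwall at the end. For the monomer-production term I would write $u-\hat u = -\partial_y E$ and integrate by parts: by \eqref{u_hat_u} and \eqref{27} the boundary contribution at infinity vanishes, the boundary term at $y_0$ is a multiple of $E(s,y_0)$, and by \eqref{23} the remaining integrand is dominated by $c_0 g(y)|E(s,y)|$, so this term is bounded by $c\bigl(|E(s,y_0)| + \int_{y_0}^\infty g|E|\,\mathrm{d}y\bigr)$.

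For the polymerization term, abbreviate $P := v/(1+\nu M)$ with $M := \int_{y_0}^\infty z u\,\mathrm{d}z$ (and likewise $\hat P,\hat M$), and $I := \int_{y_0}^\infty \tau u\,\mathrm{d}y$ (and $\hat I$). Splitting $PI - \hat P\hat I = P(I-\hat I) + (P-\hat P)\hat I$, I would treat $I-\hat I = -\int_{y_0}^\infty\tau\,\partial_y E\,\mathrm{d}y$ by integration by parts exactly as above, using $0<\tau(y)\le c_0 y$ together with the moment decay of $E$ to kill the boundary at infinity and $|\tau'|\le c_0 g$ from \eqref{20}, giving $|I-\hat I|\le c\bigl(|E(s,y_0)| + \int_{y_0}^\infty g|E|\,\mathrm{d}y\bigr)$. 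For the factor $P-\hat P$, an algebraic rearrangement yields
$$
P-\hat P = \frac{v-\hat v}{1+\nu\hat M} + \frac{\nu\, v\,(\hat M - M)}{(1+\nu M)(1+\nu\hat M)}\,,
$$
while integration by parts gives $M - \hat M = -\int_{y_0}^\infty z\,\partial_z E\,\mathrm{d}z = y_0 E(s,y_0) + \int_{y_0}^\infty E\,\mathrm{d}z$.

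This is the point I expect to be the main obstacle: the quantity $\int_{y_0}^\infty |E|\,\mathrm{d}z$ is an \emph{unweighted} $L_1$-norm, not the weighted one $\int_{y_0}^\infty g|E|\,\mathrm{d}z$ that drives the whole estimate. The resolution is that this term appears only with the prefactor $\nu$, so it is simply absent when $\nu = 0$; and when $\nu > 0$, assumption \eqref{32a} guarantees $g\ge g_0 > 0$, whence $\int_{y_0}^\infty |E|\,\mathrm{d}z \le g_0^{-1}\int_{y_0}^\infty g|E|\,\mathrm{d}z$. Thus \eqref{32a} is precisely tailored to close this gap. Using in addition that $v$ is continuous, hence bounded, on $[0,T]$, that the denominators are $\ge 1$, and that $\hat I \le c_0\int_{y_0}^\infty y\hat u\,\mathrm{d}y \le c(T)$ by Definition~\ref{D1}(ii), I obtain $|PI - \hat P\hat I| \le c(T)\bigl(|(v-\hat v)(s)| + |E(s,y_0)| + \int_{y_0}^\infty g|E|\,\mathrm{d}y\bigr)$.

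Collecting the three groups, integrating over $[0,t]$, and invoking Lemma~\ref{L2} to replace $\int_0^t|E(s,y_0)|\,\mathrm{d}s$ by $c(T)\int_0^t\int_{y_0}^\infty g|E|\,\mathrm{d}y\,\mathrm{d}s$, I arrive at
$$
|(v-\hat v)(t)| \le c(T)\int_0^t |(v-\hat v)(s)|\,\mathrm{d}s + c(T)\int_0^t\int_{y_0}^\infty g(y)|E(s,y)|\,\mathrm{d}y\,\mathrm{d}s\,.
$$
Since the final forcing term is non-decreasing in $t$, Gronwall's inequality absorbs the first integral and yields the asserted bound, completing the proof.
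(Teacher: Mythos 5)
Your proposal is correct and follows essentially the same route as the paper: subtract the two ODEs, integrate by parts in every $u-\hat u$ integral to convert to $E$, kill the boundary terms at infinity via \eqref{u_hat_u} and \eqref{27}, bound the remaining integrands with \eqref{20} and \eqref{23}, invoke \eqref{32a} to handle the unweighted $\int|E|$ arising from the saturation denominator when $\nu>0$, and close with Lemma~\ref{L2} and Gronwall. Your identification of \eqref{32a} as the precise ingredient needed for the $\nu>0$ case matches the paper's own remark that this is the only point where its proof departs from the $\nu=0$ argument of the earlier reference.
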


\begin{proof}  The proof is the same as in \cite[Lemma~3.4]{LW07} except that we have to treat also the case $\nu>0$.
It readily follows from \eqref{eqv},  \eqref{20}, and \eqref{u_hat_u} that
\begin{equation*}
\begin{split}
|(v-\hat{v})(t)| 
&\leq c(T) \int_{0}^{t} |v(s) - \hat{v}(s)| \mathrm{d}s
\\ & \quad + \nu  \|\hat{v}\|_{L_{\infty}(0,T)} \int_{0}^{t}   \left| \int_{y_0}^{\infty} z (\hat{u} - u)(s,z)  \mathrm{d} z \right| \int_{y_0}^{\infty} \tau(y) u(s,y) \mathrm{d} y  \mathrm{d} s
\\ & \quad + \|\hat{v}\|_{L_{\infty}(0,T)} \int_{0}^{t} \left| \int_{y_0}^{\infty} \tau(y) (u-\hat{u})(s,y) \mathrm{d} y \right| \mathrm{d} s
\\ & \quad + 2 \int_{0}^{t} \left| \int_{y_0}^{\infty} (u-\hat{u})(s,y) \beta(y) \int_{0}^{y_0} y_* \kappa(y_*,y) \mathrm{d} y_* \mathrm{d} y \right| \mathrm{d} s 
\end{split}
\end{equation*}
and hence
\begin{equation*}
\begin{split}
|(v-\hat{v})(t)| 
 & \leq c(T) \int_{0}^{t} |v(s) - \hat{v}(s)| \mathrm{d}s
\\ & \quad + \nu  \|\hat{v}\|_{L_{\infty}(0,T)} \int_{0}^{t}   \left|   \Big[ z E(s,z) \Big]_{z=y_0}^{z=\infty}
 - \int_{y_0}^{\infty}  E(s,z) \mathrm{d} z \right| \int_{y_0}^{\infty} \tau(y) u(s,y) \mathrm{d} y  \mathrm{d} s
\\ & \quad + \|\hat{v}\|_{L_{\infty}(0,T)} \int_{0}^{t} \left|  \Big[ -\tau(y) E(s,y) \Big]_{y=y_0}^{y=\infty} + \int_{y_0}^{\infty} \tau'(y) E(s,y) \mathrm{d} y \right|
 \mathrm{d} s
\\ & \quad + 2 \int_{0}^{t} \left| \Big[  - \beta(y) \int_{0}^{y_0} y_* \kappa(y_*,y) \mathrm{d} y_* E(s,y) \Big]_{y=y_0}^{y=\infty}  \right| \mathrm{d} s
\\  & \quad + 2 \int_{0}^{t} \left| \int_{y_0}^{\infty} \partial_y \left( \beta(y) \int_{0}^{y_0} z \kappa(z,y) \mathrm{d} z  \right) E(s,y) \mathrm{d} y \right| \mathrm{d} s \ .
\end{split}
\end{equation*} 
Note that the boundary terms at $\infty$ vanish owing to \eqref{int_kappa}, \eqref{20},  \eqref{u_hat_u}, and \eqref{27}. Thus, from \eqref{20} and \eqref{23} we deduce
\begin{equation*}
\begin{split}
|(v-\hat{v})(t)| & \leq c(T) \int_{0}^{t} |E(s,y_0)| \mathrm{d} s + c \int_{0}^{t} \int_{y_0}^{\infty}  g(y) |E(s,y)| \mathrm{d} y \mathrm{d} s
\\ & \qquad + c(T) \int_{0}^{t} |v(s) - \hat{v}(s)| \mathrm{d}s 
\end{split} 
\end{equation*}
 for $t\in [0,T]$, where we recall \eqref{32a} for the case that $\nu>0$. Gronwall's lemma together with Lemma~\ref{L2} yield the claim. 
\end{proof}

For the first term in \eqref{EHF} we obtain:

\begin{lemma}\label{L4}
If $R>y_0$ and $0 \le t \le T$, then
\begin{equation*}
\begin{split}
\int_{0}^{t} \int_{y_0}^{R} & g(y) ~ \mathrm{sign}(E(s,y)) ~H[u,\hat{u}](s,y)~ \mathrm{d} y \mathrm{d} s\qquad \qquad \qquad \qquad \qquad \qquad 
 \\ & \leq  
c \int_{0}^{t} \left| \frac{v(s)}{1 + \nu \int_{y_0}^{\infty} z u(s,z) \mathrm{d}z}- \frac{\hat{v}(s)}{1 + \nu \int_{y_0}^{\infty} z \hat{u}(s,z) \mathrm{d}z} \right|   \int_{y_0}^{R} (1+G(y)) \hat{u}(s,y) \mathrm{d} y \mathrm{d} s
\\ &  \quad + c(T) \int_{0}^{t} \int_{y_0}^{R} g(y) |E(s,y)|  \mathrm{d} y \mathrm{d} s + V(t,R)  \ ,
\end{split}
\end{equation*}
where
\begin{equation*}
\label{V(t,R)}
\begin{split}
V(t,R) := &\, G(R) \int_{0}^{t} \int_{R}^{\infty} \big|(\mu' + \beta')(y) \big|~ |E(s,y)| \mathrm{d}y \mathrm{d}s
 \\ & + \int_{0}^{t} \int_{R}^{\infty} |E(s,y)| \int_{y_0}^{R} g(y_*)|B_2(y_*,y)| \mathrm{d}y_* \mathrm{d}y \mathrm{d}s \, .
\end{split}
\end{equation*}
\end{lemma}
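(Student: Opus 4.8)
The plan is to substitute the definition of $H[u,\hat u]$ into the left-hand side, use the a.e.\ identity $\mathrm{sign}(E)\,\partial_y E = \partial_y|E|$, and estimate the five constituent terms of $H$ separately, being careful to keep track of their signs. Throughout I will exploit that the transport coefficient $\frac{v(s)}{1 + \nu \int_{y_0}^{\infty} z u(s,z)\,\mathrm{d}z}$ is nonnegative and, since $v \in C^1(\mathbb{R}^+)$ and the denominator is $\geq 1$, bounded on $[0,T]$ by some $c(T)$. The output of the estimate is organised so that the velocity-coefficient difference is left untouched (it will be handled later together with Lemma~\ref{L3}), the genuinely local terms feed the Gronwall factor $c(T)\int_{y_0}^R g|E|$, and the contributions of the inner integrals over $(R,\infty)$ are collected into $V(t,R)$.

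\textbf{Transport and velocity-difference terms.} For the transport term I would integrate by parts in $y$ over $(y_0,R)$ after writing $\mathrm{sign}(E)\,\partial_y E = \partial_y|E|$. The boundary contribution at $y=R$ equals $-\frac{v(s)}{1+\nu\int z u}\,g(R)\tau(R)|E(s,R)| \leq 0$ and is simply discarded; it is precisely this favorable sign at the right endpoint that permits working on the truncated interval $(y_0,R)$. The interior term carries the factor $(g\tau)'$, which by $(\tau g)' \leq c_0 g$ from \eqref{20} is bounded by $c(T)\int_{y_0}^R g|E|$, and the remaining boundary term at $y=y_0$ is a bounded multiple of $|E(s,y_0)|$, hence controlled by Lemma~\ref{L2} (feeding again into the Gronwall factor). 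For the velocity-coefficient difference term I estimate directly by absolute values; integrating $(\tau g)'\leq c_0 g$ from $y_0$ yields $g(y)\tau(y) \leq c\,(1+G(y))$, which produces exactly the first term on the right-hand side of the claim, namely $c\int_0^t |\,\cdots\,| \int_{y_0}^R (1+G)\hat u$.

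\textbf{Loss term and the two nonlocal terms.} The loss term contributes $-\int_{y_0}^R g(\mu+\beta)|E| \leq 0$; the key point is that I do \emph{not} discard it but retain it. The two nonlocal terms I combine into $\int_y^\infty \big[2B_2(y,y_*) - (\mu'+\beta')(y_*)\big] E(s,y_*)\,\mathrm{d}y_*$, bound by absolute values, and split the inner integral as $\int_y^\infty = \int_y^R + \int_R^\infty$. In the local part ($y_*<R$) I exchange the order of integration and apply \eqref{25} (with the two variables renamed), which bounds it by $\int_{y_0}^R g(y_*)\big(c_0 + (\mu+\beta)(y_*)\big)|E(s,y_*)|\,\mathrm{d}y_*$. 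The $(\mu+\beta)$-weighted part of this then cancels \emph{exactly} against the retained loss term, leaving only $c_0\int_{y_0}^R g|E|$. The tail part ($y_*>R$), after estimating $|2B_2-(\mu'+\beta')| \leq 2|B_2| + |\mu'+\beta'|$ and using $\int_{y_0}^R g = G(R)$, is precisely $V(t,R)$.

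\textbf{Main obstacle.} The delicate point — and the reason for keeping the loss term rather than throwing it away with its favorable sign — is this exact cancellation of the $(\mu+\beta)$-weighted contributions. Estimating the nonlocal terms through \eqref{25} unavoidably generates a term weighted by $g\,(\mu+\beta)$, which is not of Gronwall type and cannot be absorbed by $\int_{y_0}^R g|E|$ alone; it is exactly the unit coefficient in front of $(\mu+\beta)$ on the right-hand side of \eqref{25} that matches it to the loss term $-(\mu+\beta)E$ and makes it disappear. The only other point requiring care is the splitting of the two nonlocal integrals at $R$ together with the clean identification of the resulting tails as $V(t,R)$, which rests on the truncation made possible by the transport boundary sign noted above.
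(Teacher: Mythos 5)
Your argument is correct and is essentially the proof the paper has in mind: the paper's own proof is a one-line citation of estimate (37) in \cite{LW07} together with Lemma~\ref{L2}, and your reconstruction --- integrating the transport term by parts and discarding the favorably signed boundary contribution at $y=R$, bounding $\tau g\leq c(1+G)$ via \eqref{20} for the velocity-difference term, and above all retaining the loss term $-(\mu+\beta)g|E|$ so that it cancels exactly against the $(\mu+\beta)$-weighted output of \eqref{25} applied to the combined nonlocal terms, with the $(R,\infty)$-tails collected into $V(t,R)$ --- is precisely that argument. The only deviations are cosmetic: Lemma~\ref{L2} returns $\int_{y_0}^{\infty}g|E|$ rather than $\int_{y_0}^{R}g|E|$ in the Gronwall term, and your tail estimate produces a factor $2$ in front of the $B_2$-part of $V(t,R)$; neither affects the way the lemma is subsequently used, since one only needs $\limsup_{R\to\infty}V(t,R)=0$ and the full integral $\int_{y_0}^{\infty}g|E|$ in the final Gronwall step.
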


\begin{proof} 
This can be shown exactly as  estimate (37) in  \cite{LW07}  and using Lemma~\ref{L2}.
\end{proof}

For the second term in \eqref{EHF} we note:

\begin{lemma}\label{L5}
If $0 \le t \le T$, then
\begin{equation}
\begin{split}
& \int_{0}^{t} \int_{y_0}^{\infty}  g(y) ~ \mathrm{sign}(E(s,y)) ~F[u,\hat{u}](s,y)~ \mathrm{d} y \mathrm{d} s \le c(T) \int_{0}^{t} \int_{y_0}^{\infty} g(y) |E(s,y)| \mathrm{d}y \mathrm{d}s
\, .
\end{split}
\end{equation}
\end{lemma}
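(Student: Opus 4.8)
The plan is to avoid differentiating the explicit double integrals defining $F$ and instead to exploit that, as is read off from \eqref{dtE}, $F[u,\hat u]$ is by construction the primitive of the difference of the joining operators, namely $F[u,\hat u](s,y)=\int_y^\infty\big(Q[u(s,\cdot)]-Q[\hat u(s,\cdot)]\big)(y_*)\,\mathrm{d}y_*$, with $Q$ from \eqref{Q}. Fixing $s$ and introducing the weight $\Psi(s,y):=\int_{y_0}^{y}g(z)\,\mathrm{sign}(E(s,z))\,\mathrm{d}z$, Fubini's theorem turns the integrand to be estimated into $\int_{y_0}^\infty g(y)\,\mathrm{sign}(E(s,y))\,F[u,\hat u](s,y)\,\mathrm{d}y=\int_{y_0}^\infty\big(Q[u]-Q[\hat u]\big)(y_*)\,\Psi(s,y_*)\,\mathrm{d}y_*$. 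I would then apply the weak identity \eqref{tilde} with the (merely locally Lipschitz) test function $\Psi(s,\cdot)$, linearize via $(u u-\hat u\hat u)(y,z)=(u-\hat u)(y)\,u(z)+\hat u(y)\,(u-\hat u)(z)$ together with $u-\hat u=-\partial_y E$, and use the symmetry of $\eta$ and of the increment $\Psi(y+z)-\Psi(y)-\Psi(z)$ in $(y,z)$ to collapse everything into the single double integral
\[
-\int_{y_0}^\infty\!\int_{y_0}^\infty \big(\Psi(y+z)-\Psi(y)-\Psi(z)\big)\,\eta(y,z)\,\partial_y E(s,y)\,(u+\hat u)(s,z)\,\mathrm{d}y\,\mathrm{d}z\,.
\]

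The central step is an integration by parts in $y$ that moves the derivative off $E$. Writing $P(y):=\Psi(y+z)-\Psi(y)-\Psi(z)$, one has $\partial_y(P\eta)=\big(g(y+z)\,\mathrm{sign}(E(y+z))-g(y)\,\mathrm{sign}(E(y))\big)\eta+P\,\partial_y\eta$, so the interior splits into three pieces. The piece carrying $-g(y)\,\mathrm{sign}(E(y))\,\eta\,E(y)=-g(y)|E(y)|\,\eta$ has a favourable sign and is simply discarded; this is exactly what removes the dangerous factor $G(y)+y$ that a direct bound on $\eta$ via \eqref{eta_W} would otherwise produce. The surviving $g(y+z)$ contribution is paired with it through $\big(g(y+z)\,\mathrm{sign}(E(y+z))-g(y)\,\mathrm{sign}(E(y))\big)E(y)\le|g(y+z)-g(y)|\,|E(y)|$ and controlled by \eqref{eta_g}, while the piece $P\,\partial_y\eta$ is handled by first proving the sharp pointwise estimate $|P(y)|\le G(y+z)-G(y\vee z)+G(y\wedge z)$ and then invoking \eqref{p_eta_G}. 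Each resulting term then factorizes as $\big(\int_{y_0}^\infty g(y)|E(s,y)|\,\mathrm{d}y\big)\big(\int_{y_0}^\infty (G(z)+z)(u+\hat u)(s,z)\,\mathrm{d}z\big)$, and the second factor is controlled in $s$ by \eqref{u_hat_u} and the weak-solution bound $u,\hat u\in L_{\infty,\mathrm{loc}}(\mathbb{R}^+,L_1(Y,y\,\mathrm{d}y))$, which produces the constant $c(T)$.

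Two boundary contributions from the integration by parts remain to be disposed of. The term at $y\to\infty$ must vanish; this follows from the decay $(y+G(y))|E(s,y)|\to 0$ already established in the excerpt, combined with the growth control of $\eta$ and $g$ furnished by \eqref{eta_W} and \eqref{g}. The term at $y=y_0$ is of the form $E(s,y_0)\int_{y_0}^\infty P(y_0)\,\eta(y_0,z)\,(u+\hat u)(s,z)\,\mathrm{d}z$; using $G(y_0)=0$ together with \eqref{eta_G} the $z$-integral is bounded uniformly, and after integrating in $s$ the factor $E(s,y_0)$ is absorbed by Lemma~\ref{L2}, which already supplies $|E(s,y_0)|\le c(T)\int_0^s\int_{y_0}^\infty g|E|$, i.e.\ precisely the required structure.

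I expect the main obstacle to be the analysis of the joining (``gain'') structure: recognizing the favourable-sign term that must be dropped so as to avoid the spurious $G(y)+y$ growth, proving the sharp increment bound $|P(y)|\le G(y+z)-G(y\vee z)+G(y\wedge z)$, and matching the remaining pieces to the tailored hypotheses \eqref{eta_g}--\eqref{p_eta_G} imported from \cite{FL}. A secondary, more technical point is justifying the identity \eqref{tilde} for the non-smooth test function $\Psi$ and the integrability of the associated triple integral; this should be settled by working throughout with the combined increment $\Psi(y+z)-\Psi(y)-\Psi(z)$ rather than its individual terms, whose integrability is guaranteed by \eqref{eta_G} and \eqref{eta_u}.
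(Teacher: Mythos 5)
Your proposal is correct and follows essentially the same route as the paper's proof: your weight $\Psi$ is the paper's $\tilde G$, your increment bound $|P(y)|\le G(y+z)-G(y\vee z)+G(y\wedge z)$ is exactly \eqref{R_G}, and the symmetrization, the integration by parts in $y_*$ with the sign manipulation $\big(g(y+z)\,\mathrm{sign}(E(y+z))-g(y)\,\mathrm{sign}(E(y))\big)E(y)\le|g(y+z)-g(y)|\,|E(y)|$, the treatment of the boundary terms via Lemma~\ref{L2} and the decay of $(y+G(y))|E(s,y)|$, and the final appeal to \eqref{eta_g}--\eqref{p_eta_G} all coincide with the paper's argument. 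The only substantive difference is that the paper justifies the integration by parts rigorously by truncating the inner integral to $I_S$ and passing to the limit $S\to\infty$, which is precisely the technical point you flag at the end but do not carry out.
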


\begin{proof} We  adapt parts of the proof of \cite[Proposition 3.3]{FL}.
 Let $0\le t\le T$. We set
$$\tilde{G}(s,y) := \int_{y_0}^{y} g(z) \mathrm{sign}(E(s,z)) \mathrm{d} z \ , \quad y \in Y  \, , \quad s\in [0,T]\,, $$ 
and note that, for $z, y_*\in Y$ and $s \in [0,T]$,
\begin{equation}\label{R_G}
\begin{split}
\big| \tilde{G}(s,z+y_*) - \tilde{G}(s,z \vee y_*) - \tilde{G}(s, z \wedge y_*)  \big| 
 &  =
\big| \tilde{G}(s,z+y_*) - \tilde{G}(s,z ) - \tilde{G}(s, y_*)  \big| \\
& \leq  G(z+y_*) - G(z \vee y_*) + G(z \wedge y_*)\,.
\end{split}
\end{equation}
Then Fubini's theorem (along with \eqref{eta_G}, \eqref{u_hat_u}) and $\tilde{G}(s,y_0) = 0$ imply
\begin{equation*}
\begin{split}
&\int_{0}^{t} \int_{y_0}^{\infty} g(y) ~ \mathrm{sign}(E(s,y)) ~F[u,\hat{u}](s,y)~ \mathrm{d} y \mathrm{d} s 
\\ & = \int_{0}^{t}  \int_{y_0}^{\infty} \partial_y \tilde{G}(s,y) \int_{y_0}^{y} \int_{y_0}^{y}  \mathbf{1}_{[y,\infty)}(z+y_*) \eta(z,y_*)
 \big(u(s,z)u(s,y_*) - \hat{u}(s,z)\hat{u}(s,y_*) \big)\mathrm{d} z \mathrm{d} y_* \mathrm{d} y \mathrm{d} s 
\\ & \quad - \int_{0}^{t} \int_{y_0}^{\infty} \partial_y \tilde{G}(s,y) \int_{y}^{\infty}  \int_{y}^{\infty} \eta(z,y_*) \big(u(s,z)u(s,y_*) - \hat{u}(s,z)\hat{u}(s,y_*) \big)\mathrm{d} z \mathrm{d} y_*
 \mathrm{d} y \mathrm{d} s
\\ & = \int_{0}^{t}  \int_{y_0}^{\infty}  \int_{y_0}^{\infty} \eta(z,y_*) \Big[ \tilde{G}(s,z+y_*) - \tilde{G}(s,z \vee y_*) - \tilde{G}(s, z \wedge y_*) \Big] 
\\ & \qquad \qquad \qquad \qquad \qquad \qquad \qquad  \times \big(u(s,z)u(s,y_*) - \hat{u}(s,z)\hat{u}(s,y_*) \big) \mathrm{d} z \mathrm{d} y_*\mathrm{d} s \\
&= \int_{0}^{t}  \int_{y_0}^{\infty}  \int_{y_0}^{\infty} \eta(z,y_*) \Big[ \tilde{G}(s,z+y_*) - \tilde{G}(s,z \vee y_*) - \tilde{G}(s, z \wedge y_*) \Big] 
\\ & \qquad \qquad \qquad \qquad \qquad \qquad \qquad  \times \big( u(s,z) + \hat{u}(s,z) \big) \mathrm{d} z  \big( u(s,y_*) - \hat{u}(s,y_*) \big)   \mathrm{d} y_*\mathrm{d} s \,,
\end{split}
\end{equation*}
where the last inequality follows from the symmetry of $\eta$.
Thus, introducing
$$
I(s,y_*) := \int_{y_0}^{\infty} \eta(y_*,z)  \Big[ \tilde{G}(s,z+y_*) - \tilde{G}(s,z \vee y_*) - \tilde{G}(s, z \wedge y_*) \Big]  \big( u(s,z) + \hat{u}(s,z) \big) \mathrm{d} z 
$$
we obtain
\begin{equation}\label{F_I}
\begin{split}
\int_{0}^{t} \int_{y_0}^{\infty} & g(y) ~ \mathrm{sign}(E(s,y)) ~F[u,\hat{u}](s,y)~ \mathrm{d} y \mathrm{d} s
= 
\int_{0}^{t}  \int_{y_0}^{\infty}  I(s,y_*)  \big( u(s,y_*) - \hat{u}(s,y_*) \big)   \mathrm{d} y_*\mathrm{d} s \,.
\end{split}
\end{equation}
 Then \eqref{eta_G}, \eqref{u_hat_u}, and \eqref{R_G} entail
$$
|I(s,y_*)| \leq c(T)(G(y_*) + y_*) \ , \quad y_* \in Y \ , \quad 0 < s < T \, .
$$
For technical reasons we introduce for fixed $S > y_0$ the truncation
$$
I_S(s,y_*) := \int_{y_0}^{S} \eta(y_*,z)  \Big[ \tilde{G}(s,z+y_*) - \tilde{G}(s,z) - \tilde{G}(s,y_*)  \Big] \big( u(s,z) + \hat{u}(s,z) \big) \mathrm{d} z  
$$
of $I(s,y_*)$. As above, \eqref{eta_G}, \eqref{u_hat_u}, and \eqref{R_G} entail that
\begin{equation}\label{Is_G}
|I_S(s,y_*)| \leq c(T)(G(y_*) + y_*) \ , \quad y_* \in Y \ , \quad 0 \leq s \leq T \,.
\end{equation}
Moreover, recalling that $\eta \in W^1_{\infty, \mathrm{loc}}(Y \times Y)$ we have
\begin{equation}
\begin{split} \label{partial_Is}
\partial_{y_*} I_S(s,y_*) & = \int_{y_0}^{S} \partial_{y_*}\eta(y_*,z)   \Big[ \tilde{G}(s,z+y_*) - \tilde{G}(s,z) - \tilde{G}(s,y_*)  \Big] \big( u(s,z) + \hat{u}(s,z) \big) \mathrm{d} z
\\ & \quad  + \int_{y_0}^{S} \eta(y_*,z)  \Big( \partial_{y_*} \tilde{G}(s,z+y_*) - \partial_{y_*} \tilde{G}(s,y_*)  \Big) \big( u(s,z) + \hat{u}(s,z) \big) \mathrm{d} z \, .
\end{split}
\end{equation}
It then follows from \eqref{R_G} and \eqref{p_eta_G} that
\begin{equation*}
\begin{split}
&\left\vert \int_{y_0}^{S} \partial_{y_*}\eta(y_*,z)   \Big[ \tilde{G}(s,z+y_*) - \tilde{G}(s,z) - \tilde{G}(s,y_*)  \Big] \big( u(s,z) + \hat{u}(s,z) \big) \mathrm{d} z\right\vert\\
& \qquad\qquad\le K g(y_*)\int_{y_0}^S (G(z)+z) \big( u(s,z) + \hat{u}(s,z) \big) \mathrm{d} z
\end{split}
\end{equation*}
while \eqref{eta_g} entails that
\begin{equation*}
\begin{split}
&\left\vert \int_{y_0}^{S} \eta(y_*,z)  \Big( \partial_{y_*} \tilde{G}(s,z+y_*) - \partial_{y_*} \tilde{G}(s,y_*)  \Big) \big( u(s,z) + \hat{u}(s,z) \big) \mathrm{d} z \right\vert \\
& \qquad\qquad\le
\left\vert \int_{y_0}^{S} \eta(y_*,z)  \Big( \big\vert g(y_*+z)-g(y_*)\big\vert +2 g(y_*)  \Big) \big( u(s,z) + \hat{u}(s,z) \big) \mathrm{d} z \right\vert\\
& \qquad\qquad\le g(y_*)\int_{y_0}^S \Big( K(G(z)+z)+2\eta(y_*,z)  \Big) \big( u(s,z) + \hat{u}(s,z) \big) \mathrm{d} z\,.
\end{split}
\end{equation*}
Consequently, \eqref{Is_G}, \eqref{partial_Is},  and \eqref{u_hat_u} imply that $I_{S}(s,\cdot) \in  W^1_{\infty}([y_0, R))$ for each $R >y_0$ and $0\le s\le T$.
Hence, we can rewrite \eqref{F_I} in the form
\begin{equation}
\label{gE_Is}
\begin{split}
&\int_{0}^{t} \int_{y_0}^{\infty} g(y) ~ \mathrm{sign}(E(s,y)) ~F[u,\hat{u}](s,y)~ \mathrm{d} y \mathrm{d} s
\\ & = 
\int_{0}^{t} \Bigg( \int_{y_0}^{\infty}  (I-I_S)(s,y_*)  \big( u(s,y_*) - \hat{u}(s,y_*) \big)   \mathrm{d} y_* 
- \Big[ I_S(s,y_*) E(s,y_*) \Big]_{y_*=y_0}^{y_*=\infty} 
\\ & \qquad \qquad + \int_{y_0}^{\infty} \partial_{y_*} I_S(s,y_*) E(s,y_*) \mathrm{d} y_*\  \Bigg) \mathrm{d} s \,. 
\end{split}
\end{equation}
 It follows from  \eqref{Is_G} that
\begin{equation}
\label{IsE_0}
|I_S(s,y_0)E(s,y_0)| \leq c(T) |E(s,y_0)| \, , \quad 0 \leq s \leq T \,,
\end{equation}
and, since $G$ is non-negative and non-decreasing,
\begin{equation*}
\begin{split}
|I_S(s,y_*)E(s,y_*)| & \leq 
c(T) \int_{y_*}^{\infty} \big(G(z) + z  \big) (u+\hat{u})(s,z) \mathrm{d}z \,,
\end{split}
\end{equation*} 
hence, by \eqref{u_hat_u},
$$
\lim_{y_* \to \infty} I_S(s,y_*)E(s,y_*) = 0 \, , \quad 0 \leq s \leq T \ .
$$
Therefore, \eqref{gE_Is} implies for each $S > y_0$ the equality
\begin{equation}
\label{gEF_Is}
\begin{split}
\int_{0}^{t} & \int_{y_0}^{\infty} g(y) \mathrm{sign}(E(s,y)) F[u,\hat{u}](s,y)  \mathrm{d}y \mathrm{d}s
\\  & =  \int_{0}^{t} \Bigg( \int_{y_0}^{\infty}  (I-I_S)(s,y_*)  \big( u(s,y_*) - \hat{u}(s,y_*) \big)   \mathrm{d} y_* 
 \\ & \qquad \qquad + \int_{y_0}^{\infty} \partial_{y_*} I_S(s,y_*) E(s,y_*) \mathrm{d} y_* 
 +  I_S(s,y_0) E(s,y_0)  \Bigg) \mathrm{d} s \ .
\end{split}
\end{equation}
 From \eqref{R_G} and \eqref{eta_G} we obtain
\begin{equation*}
\begin{split}
& \left| \int_{y_0}^{\infty} (I-I_S)(s,y_*)  \big( u(s,y_*) - \hat{u}(s,y_*) \big)   \mathrm{d} y_* \right|
\\ & \qquad \leq  \int_{y_0}^{\infty} | (I-I_S) (s,y_*) | \big( u(s,y_*) + \hat{u}(s,y_*) \big)   \mathrm{d} y_* 
\\ & \qquad \leq \int_{y_0}^{\infty} \int_{S}^{\infty} \eta(y_*,z)  \Big[ G(z+y_*) - G(z \vee y_*) + G(z \wedge y_*)  \Big] 
\\ & \qquad \qquad \qquad \qquad \qquad \qquad \times \big( u(s,z) + \hat{u}(s,z) \big) \, \mathrm{d} z \, \big( u(s,y_*) + \hat{u}(s,y_*) \big) \,  \mathrm{d} y_* 
\\ & \qquad \leq C \int_{y_0}^{\infty} \int_{S}^{\infty} \big(  G(y_*) + y_*\big) \big(G(z) + z \big)   
\\ & \qquad \qquad \qquad \qquad \qquad \qquad \times \big( u(s,z) + \hat{u}(s,z) \big) \, \mathrm{d} z \, \big( u(s,y_*) + \hat{u}(s,y_*) \big) \,  \mathrm{d} y_* 
\\ & \qquad \leq c(T) \int_{S}^{\infty} \big(G(z) + z  \big)  \big( u(s,z) + \hat{u}(s,z) \big)  \mathrm{d} z \,,
\end{split}
\end{equation*}
so that \eqref{u_hat_u} guarantees
\begin{equation}
\label{I-Is}
\lim_{S \to \infty}  \int_{y_0}^{\infty}  (I-I_S)(s,y_*)  \big( u(s,y_*) - \hat{u}(s,y_*) \big)   \mathrm{d} y_* = 0 \ , \quad 0 \leq s \leq T \,.
\end{equation}
Next, we invoke \eqref{p_eta_G}, \eqref{u_hat_u}, \eqref{E}, \eqref{R_G} and apply Lebesgue's theorem to get
\begin{equation}
\label{p_eta_R}
\begin{split}
 \lim_{S \to \infty}& \int_{y_0}^{\infty} \int_{y_0}^{S}  \partial_{y_*}\eta(y_*,z)   \Big[ \tilde{G}(s,z+y_*) - \tilde{G}(s,z) - \tilde{G}(s,y_*)  \Big]  \big( u(s,z) + \hat{u}(s,z) \big) \mathrm{d} z   E(s,y_*) \mathrm{d} y_*
\\ & =  \int_{y_0}^{\infty} \int_{y_0}^{\infty} \partial_{y_*}\eta(y_*,z)   \Big[ \tilde{G}(s,z+y_*) - \tilde{G}(s,z) - \tilde{G}(s,y_*)  \Big] 
 \big( u(s,z) + \hat{u}(s,z) \big) \mathrm{d} z E(s,y_*) \mathrm{d} y_* \  .
\end{split}
\end{equation}
Moreover, we have   
\begin{equation*}
\begin{split}
& \limsup_{S\to \infty}  \int_{y_0}^{\infty} \int_{y_0}^{S} \eta(y_*, z) \Big(  \partial_{y_*}  \tilde{G}(s, z+y_*) -  \partial_{y_*}  \tilde{G}(s, y_*)  \Big) 
\big(u(s,z) + \hat{u}(s,z) \big) \mathrm{d}z  E(s,y_*) \mathrm{d}y_* 
\\ & = \limsup_{S\to \infty}  \int_{y_0}^{\infty} \int_{y_0}^{S} \eta(y_*, z)  \Big( g( z+y_*) \mathrm{sign} \big( E(s,z+y_*) E(s,y_*) \big) - g( y_*)  \Big)
\\ & \qquad  \qquad  \qquad  \qquad  \qquad  \qquad  \qquad  \qquad  \qquad    \times  
\big(u(s,z) + \hat{u}(s,z) \big) \mathrm{d}z |  E(s,y_*) | \mathrm{d}y_* \\
& \le \limsup_{S\to \infty}  \int_{y_0}^{\infty} \int_{y_0}^{S} \eta(y_*, z)  \Big( g( z+y_*)  - g( y_*)  \Big) 
 \big(u(s,z) + \hat{u}(s,z) \big) \mathrm{d}z | E(s,y_*)| \mathrm{d}y_*
\end{split}
\end{equation*}
and thus, by \eqref{eta_g},
\begin{equation}
\label{eta_p_R}
\begin{split}
 \limsup_{S\to \infty} & \int_{y_0}^{\infty} \int_{y_0}^{S} \eta(y_*, z) \Big(  \partial_{y_*}  \tilde{G}(s, z+y_*) -  \partial_{y_*}  \tilde{G}(s, y_*)  \Big) 
\big(u(s,z) + \hat{u}(s,z) \big) \mathrm{d}z  E(s,y_*) \mathrm{d}y_*  
\\ & \le   \int_{y_0}^{\infty} \int_{y_0}^{\infty} \eta(y_*, z)  \Big( g( z+y_*)  - g( y_*)  \Big)   \big(u(s,z) + \hat{u}(s,z) \big) \mathrm{d}z | E(s,y_*)| \mathrm{d}y_* \,.
\end{split}
\end{equation}
We then pass to the limit $S\to\infty$ in \eqref{gEF_Is} and deduce from \eqref{R_G}, \eqref{partial_Is}  and \eqref{IsE_0}-\eqref{eta_p_R} that
\begin{equation*}
\begin{split}
&\int_{0}^{t} \int_{y_0}^{\infty} g(y) ~ \mathrm{sign}(E(s,y)) ~F[u,\hat{u}](s,y)~ \mathrm{d} y \mathrm{d} s 
\\ & \leq    \int_{0}^{t}  \int_{y_0}^{\infty} \int_{y_0}^{\infty}\eta(y,z) \big[g(y + z) - g(y) \big] ~ \big(u(s,z) + \hat{u}(s,z) \big) ~ \mathrm{d}z ~ |E(s,y)|  \mathrm{d}y \mathrm{d}s
\\ &  \quad +   \int_{0}^{t}  \int_{y_0}^{\infty} \int_{y_0}^{\infty} |\partial_y \eta(y,z)| \big[ G(s,y+z) - G(s,y \vee z) + G(s,y \wedge z)  \big] 
\\ &  \qquad \qquad \qquad \qquad \qquad \qquad \qquad \qquad \qquad \times (u(s,z) + \hat{u}(s,z)) \mathrm{d}z ~| E(s,y)|  \mathrm{d}y \mathrm{d}s \ 
\\ & \quad + c(T) \int_{0}^{t} |E(s,y_0)| \mathrm{d}s 
\, .
\end{split}
\end{equation*}
The inequalities \eqref{eta_g} and \eqref{p_eta_G} yield together with \eqref{u_hat_u}  and Lemma~\ref{L2} the assertion.
\end{proof}

We now obtain from Fatou's lemma and  \eqref{EHF}, \eqref{g_E} that  
\begin{equation*}
\begin{split}
\int_{y_0}^{\infty} g(y)|E(t,y)| \mathrm{d}y
 &\le \limsup_{R \to \infty} \int_{0}^{t} \int_{y_0}^{R} g(y) \mathrm{sign}(E(t,y)) H[u,\hat{u}](s,y) \mathrm{d} y \mathrm{d} s
\\ & \qquad  + \int_{0}^{t} \int_{y_0}^{\infty} g(y) \mathrm{sign}(E(t,y)) F[u,\hat{u}](s,y) \mathrm{d} y \mathrm{d} s 
\end{split}
\end{equation*}
for $0 \le t \le T$.  Lemma~\ref{L4} and Lemma~\ref{L5} then yield 
\begin{equation}
\label{g_sup_R}
\begin{split}
 \int_{y_0}^{\infty} & g(y)|E(t,y)| \mathrm{d}y\\
 & \leq    c \int_{0}^{t} \left| \frac{v(s)}{1 + \nu \int_{y_0}^{\infty} z u(s,z) \mathrm{d}z}- \frac{\hat{v}(s)}{1 + \nu \int_{y_0}^{\infty} z \hat{u}(s,z) \mathrm{d}z} \right|  \int_{y_0}^{\infty} (1+G(y)) \hat{u}(s,y) \mathrm{d} y \mathrm{d} s
\\ & \quad  +c(T) \int_{0}^{t} \int_{y_0}^{\infty} g(y) |E(s,y)|  \mathrm{d} y \mathrm{d} s  +\limsup_{R \to \infty}   V(t,R) \,.
\end{split}
\end{equation}
Using
\begin{equation*}
\begin{split}
\left| \frac{v(s)}{1 + \nu \int_{y_0}^{\infty} z u(s,z) \mathrm{d}z}- \frac{\hat{v}(s)}{1 + \nu \int_{y_0}^{\infty} z \hat{u}(s,z) \mathrm{d}z} \right|
& \leq |(v - \hat{v})(s)| 
\\ & \quad + \|\hat{v}\|_{\infty} \, \nu \left| \int_{y_0}^{\infty} z (u - \hat{u})(s,z) \mathrm{d}z \right|
\end{split}
\end{equation*}
along with \eqref{32a} when $\nu>0$, it follows from \eqref{u_hat_u}, Lemma~\ref{L2} and Lemma~\ref{L3} that
\begin{equation}
\label{E_0}
\int_{y_0}^{\infty} g(y) |E(t,y)| \mathrm{d} y  \leq c(T) \int_{0}^{t} \int_{y_0}^{\infty} g(y) |E(s,y)| \mathrm{d} y \mathrm{d} s +\limsup_{R \to \infty}   V(t,R) \
\end{equation}
for $0 < t < T$. It then remains to show that $\limsup_{R \to \infty}   V(t,R)=0$  for which we consider the cases of Theorem~\ref{T1} and Theorem~\textup{\ref{T2}} separately.

\subsection*{Proof of  Theorem~\ref{T1}}

Let \eqref{18}-\eqref{32} be satisfied and given an initial value  $(v^0,u^0)$ with $v^0 >0$ and $u^0 \in L_1^+(Y, y\mathrm{d}y) \cap L_1\big( Y, (\mu + \beta)(y) G(y) \mathrm{d}y \big)$ consider two corresponding monomer-preserving weak solutions
$(v,u)$ and $(\hat{v}, \hat{u})$ to \eqref{eqv}-\eqref{AW} in the sense of Definition~\ref{D1} such that
\begin{equation}
\label{u_hat_u_1}
u, \hat{u}  \in L_{\infty, \mathrm{loc}}(\mathbb{R}^+, L_1(Y, G(y)\mathrm{d}y)) \cap L_{1, \mathrm{loc}}(\mathbb{R}^+, L_1(Y, (\mu + \beta)(y)G(y)\mathrm{d}y)) \, .
\end{equation} 
One then shows exactly as in the proof of \cite[Theorem 3.1]{LW07} (see equation (39) therein) that \eqref{31}, \eqref{32} along with \eqref{u_hat_u_1} imply
\begin{equation}\label{V}
\lim_{R \to \infty} V(t,R) = 0 \, , \quad  0<t<T \, .
\end{equation}
Applying Gronwall's lemma to  inequality \eqref{E_0} yields  $E \equiv 0$ and Theorem~\ref{T1} follows.

\subsection*{Proof of Theorem~\textup{\ref{T2}}}

To prove Theorem~\textup{\ref{T2}} suppose \eqref{18}-\eqref{p_eta_G} with $g \equiv 1$ (then \eqref{32a} trivially holds) and \eqref{eta_mu}-\eqref{RB}.
Given an initial value  $(v^0,u^0)$ with $v^0 >0$ and  $u^0 \in L_1^+(y, y \mathrm{d}y)$ consider two corresponding monomer-preserving weak solutions
$(v,u)$ and $(\hat{v}, \hat{u})$ to \eqref{eqv}-\eqref{AW} in the sense of Definition~\ref{D1}.
Note that now $G(y) = y-y_0$, hence \eqref{u_hat_u} holds. By Definition~\ref{D1} (see in particular \eqref{monomererhaltend}) we have
\begin{equation} 
\label{y_mu}
\big[ (s,y) \mapsto y\big(1 + \mu(y) \big) u(s,y) \big] \in L_1 \big( (0,t) \times Y \big) \ , \quad t >  0 \, .
\end{equation}
One then shows   as in \cite[Lemma 2.1]{LW07} (using \eqref{monomererhaltend}, \eqref{26}) that 
\begin{equation}
\label{R_beta}
\lim_{R \to \infty} R \int_{0}^{t} \int_{R}^{\infty} \beta(y) u(s,y) \mathrm{d} y \mathrm{d} s = 0 \ , \quad t > 0 \,,
\end{equation}
 the only difference being that when testing \eqref{equ} by $\varphi(y) := R \wedge y$ with $R>y_0$, an additional term
\begin{equation*}
\begin{split}
  \int_{0}^{t} & \int_{y_0}^{\infty} \int_{y_0}^{\infty} \left(  R \wedge (y+z) - R \wedge y - R \wedge z  \right) \, \eta(y,z) \, u(s,y) \, u(s,z) \, \mathrm{d} y \, \mathrm{d} z \, \mathrm{d} s  
\end{split}
\end{equation*}
comes in. However, this term tends to zero as $R\to \infty$ by Lebesgue's theorem since
\eqref{eta_mu} together with \eqref{u_hat_u} and \eqref{y_mu} entail that
\begin{equation*}
\begin{split}
  \int_{0}^{t} & \int_{y_0}^{\infty} \int_{y_0}^{\infty} \left\vert  R \wedge (y+z) - R \wedge y - R \wedge z  \right\vert \, \eta(y,z) \, u(s,y) \, u(s,z) \, \mathrm{d} y \, \mathrm{d} z \, \mathrm{d} s\\
		&\le C_1 \int_{0}^{t} \int_{y_0}^{\infty} \int_{y_0}^{\infty} (y+z) \, \big(\mu(y) + \mu(z) \big) \, u(s,y) \, u(s,z) \, \mathrm{d} y \, \mathrm{d} z \, \mathrm{d} s  
\, <  \, \infty \,.
\end{split}
\end{equation*}
 Finally, exactly as in the proof of \cite[Theorem 3.2]{LW07}, \eqref{y_mu} and \eqref{R_beta} imply \eqref{V}. We may then apply again Gronwall's inequality to \eqref{E_0} (with $g\equiv 1$) and conclude Theorem~\ref{T2}.

\subsection*{Proof of Theorem~\ref{ThE} and Theorem~\ref{ThE2}}

Now, Theorem~\ref{ThE} and Theorem~\ref{ThE2} are consequences of
Theorem~\ref{T1} respectively Theorem~\ref{T2}. Indeed, it was shown in the proof of \cite[Theorem 1.2]{LW07} that \eqref{AK1}, \eqref{AK2} imply \eqref{18}-\eqref{25} and \eqref{31}-\eqref{32} when taking $g(y):= y^{\alpha-1}$ (to satisfy \eqref{32a} one may take $g(y):= y^{(\alpha \vee 1)-1}$) and that \eqref{26}-\eqref{RB} hold under the assumptions of Theorem~\ref{ThE2} while \eqref{eta_mu} follows from \eqref{A} with $\alpha=m$ in this case.
In \cite[Lemma~3.4.]{FL} it was shown  that \eqref{A}, \eqref{B} imply \eqref{eta_W}-\eqref{p_eta_G} for these $g$. This proves Theorem~\ref{ThE} and Theorem~\ref{ThE2}.

\section*{Acknowledgments} We thank the referee for carefully reading the paper and providing helpful comments.


\end{document}